\documentclass[12pt]{amsart}
\usepackage{amssymb}
\input amssym.def
\usepackage{amsmath,amsfonts,hyperref,xcolor,mathtools}
\usepackage{amscd}
\usepackage[mathscr]{eucal}
\usepackage{enumitem}
\usepackage[thinc]{esdiff}
\usepackage{orcidlink}

\setlength{\voffset}{-1.5cm} \setlength{\hoffset}{-2cm}
\setlength{\textwidth}{6.6in} \setlength{\textheight}{9in}

\setlength{\abovedisplayshortskip}{3mm}
\setlength{\belowdisplayshortskip}{3mm}

\hypersetup{colorlinks=true,citecolor={purple},linkcolor={teal},urlcolor={violet}}


\newcommand{\Q}{{\mathbb Q}}
\newcommand{\C}{{\mathbb C}}
\newcommand{\R}{{\mathbb R}}

\newtheorem{thm}{Theorem}
\newtheorem{lem}{Lemma}
\newtheorem{cor}{Corollary}

\newtheorem{rmk}{Remark}

\newcommand{\thmref}[1]{Theorem~\ref{#1}}

\newcommand{\lemref}[1]{Lemma~\ref{#1}}
\newcommand{\corref}[1]{Corollary~\ref{#1}}
\newcommand{\rmkref}[1]{Remark~\ref{#1}}

\makeatletter
\@namedef{subjclassname@2020}{%
\textup{2020} Mathematics Subject Classification}
\makeatother

\parindent=0.5cm
\footskip=0.85cm

\begin{document}


\title[Ramaswami Type translation formulae for the polylogarithm functions]
{Ramaswami Type translation formulae for the polylogarithm functions}

\author{Pawan Singh Mehta \orcidlink{0009-0001-6717-9063} and Biswajyoti Saha\orcidlink{0009-0009-2904-4860}}

\address{Pawan Singh Mehta and Biswajyoti Saha\\ \newline
Department of Mathematics, Indian Institute of Technology Delhi, 
Hauz Khas, New Delhi 110016, India.}
\email{maz218521@maths.iitd.ac.in, biswajyoti@maths.iitd.ac.in}

\subjclass[2020]{11M06, 11M35, 11B68}

\keywords{polylogarithm functions, translation formulae, Bernoulli numbers}

\begin{abstract}
In 1934, Ramaswami proved a number of curious translation formulae satisfied by the
Riemann zeta function. Such translation formulae, in turn give the meromorphic extension
of the Riemann zeta function. In 1954, Apostol extended those identities to establish a
family of such similar translation formulae. In this article, we establish many such
Ramaswami and Apostol type translation formulae for the Dirichlet series defining the
polylogarithm functions. This extended set up has many interesting applications, for example,
it allows us to also find some (seemingly new)
recurrence relations between the Bernoulli numbers, and use them to deduce some
congruence properties of the tangent numbers.
\end{abstract}

\maketitle

\section{Introduction}
For a complex number $s$ with $\Re(s)>1$, the Riemann zeta function $\zeta(s)$ is defined by
$$
\zeta(s):=\sum_{n\ge 1}\frac{1}{n^s}.
$$
It defines a holomorphic function in the complex half plane $\Re(s)>1$ and can be
continued to a meromorphic function in $\C$ having a simple pole at $s=1$ with residue 1. 
V. Ramaswami \cite{VR} obtained some interesting translation formulae involving 
the Riemann zeta function, which can be used recursively to establish the analytic 
continuation of $\zeta(s)$.
For each $n\geq 0$, let $P_n(s)$ be defined by 
$$
P_0(s)=1, P_n(s)=\frac{s(s+1)\cdots(s+n-1)}{n!}.
$$
Then for complex number $s$ with $\Re(s)>1$, he \cite{VR} proved the following identities:
\begin{equation}\label{Ramaswami-2}
\zeta(s)\left(1-2^{1-s}\right)=\sum_{n\ge 1}\frac{P_n(s)\zeta(s+n)}{2^{s+n}},
\end{equation}
\begin{equation}\label{Ramaswami-3}
\zeta(s)\left(1-3^{1-s}\right)=1+2\sum_{n\ge 1}\frac{P_{2n}(s)\zeta(s+2n)}{3^{s+2n}},
\end{equation}
\begin{equation}\label{Ramaswami-6}
\zeta(s)\left(1-2^{1-s}-3^{1-s}-6^{1-s}\right)=1+2\sum_{n\ge 1}\frac{P_{2n}(s)\zeta(s+2n)}{6^{s+2n}}.
\end{equation}
He then used the last two formulae to show non-vanishing of $\zeta(s)$ in the region $1/2 \le \Re(s) \le 1$ and $|s| \le 8$.
These similar looking formulae however does not seem to have a common natural extension.
T. Apostol \cite{TA} proved two different formulae, one extending \eqref{Ramaswami-2}, and another
extending \eqref{Ramaswami-3} and \eqref{Ramaswami-6}.
For any given integer $k\geq 2$  and a complex number $s$ with $\Re(s)>1$, he proved:
\begin{equation}\label{Apostol-trans-1}
\zeta(s)\left(1-k^{1-s}\right)=\sum_{n\ge 1}\frac{P_{n}(s)\zeta(s+n)}{k^{s+n}}\frac{B_{n+1}(k)-B_{n+1}}{n+1},
\end{equation}
where for $n,k\geq 0$, $B_{n}(k)$ are the values of Bernoulli polynomial $B_n(x)$ defined by the generating function
\begin{align}\label{Bernoulli-poly}
\frac{te^{xt}}{e^t-1}=\sum_{n\geq 0}B_{n}(x)\frac{t^n}{n!}
\end{align}
and $B_n:=B_n(0)$ is the $n$th Bernoulli number.
Note that when $k=2$, \eqref{Apostol-trans-1} is same as \eqref{Ramaswami-2} (using \eqref{m-power-sum} below).
Moreover, for all integers $k>2$, Apostol \cite{TA} proved that
\begin{align}\label{TA-formula}
\zeta(s)\sum_{d|k}\mu(d)d^{-s}=\phi(-s,k)+2\sum_{n\ge 0}\frac{P_{2n}(s)\zeta(s+2n)}{k^{s+2n}}\phi(2n,k),
\end{align}
where $\mu(n)$ is the Möbius function and  $\phi(s,k):=\sum_{\substack{1\leq h\leq \frac{k}{2}\\ (h,k)=1}}h^{s}$. 
Taking $k=3, 6$ in \eqref{TA-formula}, one gets back \eqref{Ramaswami-3}, \eqref{Ramaswami-6}, respectively.

 In this article, we consider the polylogarithm function $\mathrm{Li}_z(s)$, where $z$ is a root of unity,
 defined for complex numbers $s$ with $\Re(s)>1$ as a Dirichlet series by
\begin{equation}
\mathrm{Li}_z(s):=\sum_{n\geq1}\frac{z^n}{n^s}.
\end{equation}
The above series converges uniformly in every compact subset of the complex half plane 
$\Re(s)>1$. Therefore, it defines a holomorphic function in the complex half plane $\Re(s)>1$. If $z=1$ then 
$\mathrm{Li}_z(s)=\zeta(s)$. Hence it has a simple pole at $s=1$ with residue 1 and for other values of $z$,
$\mathrm{Li}_z(s)$ can be extended to an entire function. 
The aim of this article is to obtain Ramaswami and Apostol type formulae involving $\mathrm{Li}_z(s)$.
Another translation formula satisfied by $\mathrm{Li}_z(s)$, which can also be used to get the analytic continuation
of $\mathrm{Li}_z(s)$, can be found in \cite{BS}.

We also exhibit some applications of such identities. 
For example, we obtain some (seemingly new) recurrence formulae for the Bernoulli numbers.
We show that (see \eqref{Bernoulli-recurrence-3} below) for every integer $n\geq 1$, 
the even indexed Bernoulli number $B_{2n}$ satisfies the recurrence relation
$$
(1-2^{2n})(1-3^{2n-1})\frac{B_{2n}}{2n}= \frac{2^{2n-1}-1}{2} +\sum_{i=1}^{n-1}\binom{2n-1}{2i-1}\frac{B_{2i}}{2i} \ 3^{2i-1}(1-2^{2i})(2^{2n-2i}-1).
$$
This has in turn interesting applications regarding the tangent numbers, defined for $n \ge 1$ by
$$
T_n:=(-1)^{n}2^{2n}(1-2^{2n})\frac{B_{2n}}{2n}.
$$
Note that the $n$-th tangent number $T_n$ is the coefficient of $x^{2n-1}/(2n-1)!$ in the Taylor expansion
of the tangent function around $x=0$, i.e.,
$$
\tan x= \sum_{n \ge 1} T_n \frac{x^{2n-1}}{(2n-1)!}.
$$
Properties of the Tangent numbers are of classical interest (see \cite{KB}).
It is well-known that $T_n$'s are integers \cite{KB} (also see \cite[Remark 1.18]{AIK}).
From the work of \cite{HL}, it also follows that they are all even integers for $n \ge 2$ (also see \cite{ZLZ}).

In this article, as an application of our formulae, we prove that for $n \ge 2$, the last digit of $T_n$ is either $2$ or $6$,
depending on $n$ is even or odd. We also prove that the Tangent numbers are never divisible by $3$ and give
explicit congruence relations modulo $3$.

 We also get some series expressions involving the odd zeta values, for example,
 \begin{equation}\label{zeta-series}
-\frac{3}{16}=\sum_{m\ge 1}\frac{m}{3^{2m+1}}\zeta(2m+1)(2^{2m-1}-1)(2^{-2m}-1).
\end{equation}
This brings us to the notion of {\it $\zeta$-series representation} for a real number
(see \cite{BBC} for an analogous concept). We say that a real number $x$ has a
{\it $\zeta$-series representation} if there exists a sequence of rational numbers $(p_n/q_n)_{n \ge 2}$ 
 such that $x$ can be written as a convergent infinite series of the form
 $$
\sum_{n\geq 2}\frac{p_n}{q_n}\zeta(n).
 $$
Using \eqref{zeta-series}, we thus get that every rational number has a $\zeta$-series representation. Moreover, we get
that every rational number has a $\zeta$-series representation, which only includes the odd zeta values. This can also
be concluded from \cite[eq. (2)]{VR}. 

\section{Translation Formulae for $\mathrm{Li}_{z}(s)$} 
\label{Sec:Trans-formula}

In this section, we present Ramaswami and Apostol type translation formulae for $\mathrm{Li}_z(s)$ where $z$ is a primitive $q$-th root of unity
for $q\geq1$. Define $\delta_q$ to be the integral part of $1/q$. We prove the following:

\begin{thm}\label{trans-thm}
Let $s\in \C$ with $\Re(s)>1$ and $z$ be a primitive $q$-th root of unity with $q\geq1$. Let $k\geq 2$
be an integer of the form $bq+1$ for some positive integer $b$. Then we have,
\begin{align}\label{trans-formula-1}
\mathrm{Li}_z(s)(1-k^{\delta_{q}-s})=\sum_{m\geq1}\frac{P_m(s)\mathrm{Li}_z(s+m)}{k^{s+m}} \sum_{h=1}^{k-1} z^{-h}h^m.
\end{align}
\end{thm}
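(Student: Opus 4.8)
The plan is to start from the Dirichlet series $\mathrm{Li}_z(s)=\sum_{n\ge 1}z^n n^{-s}$ and split the sum according to residue classes modulo $k$. Writing $n = km + h$ with $0\le h\le k-1$ (and grouping the $h=0$ term separately), we get
\begin{align*}
\mathrm{Li}_z(s)=\sum_{m\ge 1}\frac{z^{km}}{(km)^s}+\sum_{h=1}^{k-1}\sum_{m\ge 0}\frac{z^{km+h}}{(km+h)^s}.
\end{align*}
The first sum is $k^{-s}\sum_{m\ge 1}z^{km}m^{-s}$; since $k\equiv 1\pmod q$ and $z^q=1$, we have $z^{km}=z^m$, so this equals $k^{-s}\mathrm{Li}_z(s)$. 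That is where the hypothesis $k=bq+1$ enters, and it is the structural reason a clean identity exists; I would isolate this observation early.

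Next I would expand each tail term $\sum_{m\ge 0}(km+h)^{-s}$ using the binomial (generalized) series. Factoring out $(km)^{-s}$ is the wrong move when $m=0$, so instead I would handle the $m=0$ term of each $h$-sum separately — it contributes $\sum_{h=1}^{k-1}z^h h^{-s}$, which combines with $k^{-s}\mathrm{Li}_z(s)$ from above to give exactly $\mathrm{Li}_z(s)(\text{something})$ after also using $\sum_{h=1}^{k-1}z^h = -1 + \sum_{h=0}^{k-1}z^h$; here the factor $k^{\delta_q - s}$ arises, with $\delta_q = \lfloor 1/q\rfloor$ distinguishing $q=1$ (where $\sum_{h=0}^{k-1}z^h = k$ since $z=1$, contributing the extra $k^{1-s}$) from $q\ge 2$ (where the full geometric sum over a nontrivial root of unity period-$q$ vanishes because $q\mid k-1$... — more precisely $\sum_{h=0}^{k-1}z^h$ telescopes by periodicity). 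For $m\ge 1$ I would write $(km+h)^{-s}=(km)^{-s}(1+h/(km))^{-s}=\sum_{j\ge 0}\binom{-s}{j}h^j (km)^{-s-j}$, valid since $0<h<k\le km$. Then $\binom{-s}{j}=(-1)^j P_j(s)$, and summing over $m\ge 1$ produces $\sum_{m\ge 1}(km)^{-s-j}=k^{-s-j}\mathrm{Li}_z(s+j)$ after again using $z^{km}=z^m$. Collecting everything, the double sum over $h$ and $j$ reorganizes into $\sum_{j\ge 1}P_j(s)k^{-s-j}\mathrm{Li}_z(s+j)\sum_{h=1}^{k-1}z^{-h}h^j$ — note $z^{km+h}=z^{m+h}=z^m\cdot z^h$ and the $z^m$ is absorbed into $\mathrm{Li}_z$, while the stray $z^h$ versus the claimed $z^{-h}$ should be reconciled by re-indexing $h\mapsto k-h$ or by noting $z^h = z^{h-k}\cdot z^k = z^{h-k+ \text{(mult of }q)}$; I will need to check the sign/indexing of $h$ carefully, and the $j=0$ term of this expansion is precisely what got peeled off to form the left-hand side.

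The main obstacle is justifying the interchange of the (double, ultimately triple) summations: the binomial expansion of $(1+h/(km))^{-s}$ converges, and for $\Re(s)>1$ the resulting iterated sums converge absolutely, but I would want to verify absolute convergence uniformly — e.g. by bounding $|P_j(s)| h^j (km)^{-\Re(s)-j}$ and summing the geometric-type series in $j$ first (since $h/(km)\le h/k<1$), then checking the remaining sum over $m$ and $h$ converges because $\Re(s)>1$. Once Fubini is licensed, the rest is bookkeeping. A secondary point to be careful about is the $q=1$ versus $q\ge 2$ case split hidden in $\delta_q$: I would present the geometric sum $\sum_{h=0}^{k-1}z^h$ computation once, observing it equals $k$ iff $z=1$ and equals $0$ otherwise (using $q\mid k-1$ so the partial sum completes full periods up to the last incomplete block which — again by $q\mid k-1$ — actually vanishes too), matching $k^{\delta_q}$ exactly.
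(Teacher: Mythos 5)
Your overall strategy --- split $\mathrm{Li}_z(s)$ into residue classes modulo $k$, use $z^{km}=z^m$ (which is indeed where the hypothesis $k\equiv 1\bmod q$ enters), and expand binomially --- is the elementary counterpart of the paper's argument, which Taylor-expands the Lerch function $z\phi(s,a+1,z)$ at $a=0$ and then specialises $a$. But there is a genuine gap: the decomposition you chose, $n=km+h$ with $m\ge 0$, $1\le h\le k-1$, together with the expansion $(km+h)^{-s}=\sum_{j}\binom{-s}{j}h^j(km)^{-s-j}$, does \emph{not} produce \eqref{trans-formula-1}. Carrying your bookkeeping through correctly gives
\begin{align*}
\mathrm{Li}_z(s)(1-k^{\delta_q-s})=\sum_{h=1}^{k-1}z^hh^{-s}+\sum_{m\ge1}\frac{(-1)^mP_m(s)\mathrm{Li}_z(s+m)}{k^{s+m}}\sum_{h=1}^{k-1}z^hh^m,
\end{align*}
which is the alternating identity \eqref{rmk-trans-mobius-alter} of Remark~\ref{rmk-trans-mobius}, not the theorem: the $m=0$ terms $\sum_{h=1}^{k-1}z^hh^{-s}$ survive as a finite Dirichlet polynomial (they are not a scalar multiple of $\mathrm{Li}_z(s)$ and cannot be absorbed into the left-hand side as you suggest), the signs $(-1)^m$ coming from $\binom{-s}{m}=(-1)^mP_m(s)$ remain, and the inner sum carries $z^{h}$ rather than $z^{-h}$. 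The re-indexing $h\mapsto k-h$ you gesture at sends $z^hh^m$ to $z^{1-h}(k-h)^m$ and repairs none of this; in fact \eqref{trans-formula-1} and \eqref{rmk-trans-mobius-alter} are genuinely different identities, whose difference is the nontrivial relation \eqref{trans-sub}.

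The fix is to use the \emph{downward} decomposition $n=km-h$ with $m\ge1$, $0\le h\le k-1$ (every $n\ge1$ occurs exactly once, so no boundary terms are left over) and to expand $(km-h)^{-s}=(km)^{-s}\left(1-\frac{h}{km}\right)^{-s}=\sum_{j\ge0}P_j(s)h^j(km)^{-s-j}$, which produces $P_j(s)$ with no sign and, via $z^{km-h}=z^mz^{-h}$, the factor $z^{-h}$; the $j=0$ term is $k^{-s}\mathrm{Li}_z(s)\sum_{h=0}^{k-1}z^{-h}=k^{\delta_q-s}\mathrm{Li}_z(s)$, giving exactly \eqref{trans-formula-1}. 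This is precisely what the paper's choice of the point $a=-h/k$ (rather than $a=+h/k$) in the Taylor expansion accomplishes, the paper justifying the expansion by an explicit remainder estimate in Lemma~\ref{T-series}. Your convergence discussion transfers to the corrected decomposition with minor changes, since $h/(km)\le h/k<1$ still holds; note only that for $m=1$ the ratio can be close to $1$, so one should use a bound such as $\sum_j|P_j(s)|\left(\frac{h}{km}\right)^j\le\left(1-\frac{h}{km}\right)^{-|s|}$ and then verify that the remaining sum over $m$ and $h$ converges for $\Re(s)>1$.
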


For $z=1$, \eqref{trans-formula-1} reduces to \eqref{Apostol-trans-1}, using the formula (see \cite[p. 58]{AIK})
\begin{equation}\label{m-power-sum}
\sum_{h=1}^{k-1}h^m=\frac{B_{m+1}(k)-B_{m+1}}{m+1},
\end{equation}
which is true for any positive integers $k,m\geq 1$.
Hence \eqref{trans-formula-1} can be deemed as an extension of Apostol's translation formula \eqref{Apostol-trans-1}. 
We also establish a closed formula for the sum $\sum_{h=1}^{k-1}z^hh^m$, similar to \eqref{m-power-sum},
when $z\neq1$ is a primitive root of unity.
As it turns out, this sum is related to the values of the generalised Euler polynomials defined in \cite[Appendix A]{PB}.
Using this formula, we obtain another form of the translation formula \eqref{trans-formula-1} for $\mathrm{Li}_z(s)$,
when $z\neq 1$ is a root of unity (see \eqref{Apostol-type-trans-1} below).

We first recall the definition of the generalised Euler polynomials. For a given integer $q\geq2$, the $n$-th generalised
Euler polynomial $E_{q,n}(x)$  is defined by the following generating series:
\begin{align}\label{Euler-poly}
\frac{qe^{xt}}{1+e^t+\cdots+e^{(q-1)t}}=\sum_{n\ge 0}E_{q,n}(x)\frac{t^n}{n!}.
\end{align}
 
 \begin{cor}\label{cor-Apostol-type-trans-1}
Let $q\geq2$ be an integer and other notations be as in Theorem \ref{trans-thm}. Then we have,
\begin{align}\label{Apostol-type-trans-1}
\mathrm{Li}_z(s)(1-k^{-s})=\frac{1}{q}\sum_{m\geq1}\frac{P_m(s)\mathrm{Li}_z(s+m)}{k^{s+m}}\sum_{j=1}^{q-1}\frac{z^{j}-1}{z^j(z-1)}\left(E_{q,m}(j)-E_{q,m}(k+j-1)\right).
\end{align}
\end{cor}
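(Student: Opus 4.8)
The plan is to derive Corollary \ref{cor-Apostol-type-trans-1} from Theorem \ref{trans-thm} by evaluating the inner sum $\sum_{h=1}^{k-1} z^{-h} h^m$ in closed form. When $q \geq 2$, we have $\delta_q = 0$, so \eqref{trans-formula-1} already reads $\mathrm{Li}_z(s)(1-k^{-s}) = \sum_{m\geq 1} \frac{P_m(s)\mathrm{Li}_z(s+m)}{k^{s+m}} S_m(k)$, where $S_m(k) := \sum_{h=1}^{k-1} z^{-h} h^m$. So the entire content of the corollary is the identity
\begin{align*}
\sum_{h=1}^{k-1} z^{-h} h^m = \frac{1}{q} \sum_{j=1}^{q-1} \frac{z^j - 1}{z^j(z-1)}\bigl(E_{q,m}(j) - E_{q,m}(k+j-1)\bigr),
\end{align*}
valid whenever $k \equiv 1 \pmod q$ (so that $z^{-k} = z^{-1}$, which will be used below). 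I would first state and prove this as a standalone lemma, then substitute.

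The key step is a generating-function computation. Consider $F(t) := \sum_{m \geq 0} S_m(k)\, t^m/m! = \sum_{h=1}^{k-1} z^{-h} e^{ht}$, a finite geometric sum equal to $z^{-1}e^t \cdot \frac{(z^{-1}e^t)^{k-1} - 1}{z^{-1}e^t - 1} = \frac{z^{-k}e^{kt} - z^{-1}e^t}{z^{-1}e^t - 1}$. Using $z^{-k} = z^{-1}$ (since $k \equiv 1 \bmod q$), the numerator becomes $z^{-1}(e^{kt} - e^t)$, so $F(t) = \frac{e^{kt} - e^t}{1 - z e^{-t}} \cdot$ (after clearing the $z^{-1}$), i.e. $F(t) = \frac{z(e^{kt}-e^t)}{z - e^t}$ — I would fix the bookkeeping carefully here, but the shape is a difference $e^{kt} - e^t$ over $z - e^t$. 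Next I would relate $\frac{1}{z - e^t}$ to the generalized Euler generating function \eqref{Euler-poly}. The trick is the telescoping/partial-fraction identity over $q$-th roots of unity: writing $G_q(t) := \frac{q}{1 + e^t + \cdots + e^{(q-1)t}} = \frac{q(e^t - 1)}{e^{qt} - 1}$, one has $\sum_{j=1}^{q-1} \frac{z^j - 1}{z^j(z-1)} e^{jt} G_q(t) e^{(k-1)t}$ equal (via the generating series for $E_{q,m}$) to $\sum_{j=1}^{q-1}\frac{z^j-1}{z^j(z-1)}\bigl(\text{gen. fn. of } E_{q,m}(j)\bigr)$ — here I use that $e^{jt}G_q(t) = \sum_m E_{q,m}(j) t^m/m!$ and $e^{(k+j-1)t}G_q(t) = \sum_m E_{q,m}(k+j-1)t^m/m!$. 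So the right-hand side of the lemma has generating function $\frac{1}{q}\sum_{j=1}^{q-1} \frac{z^j-1}{z^j(z-1)}\bigl(e^{jt} - e^{(k+j-1)t}\bigr)G_q(t) = \frac{e^t - e^{kt}}{q}\,\frac{e^{(q-1)t} - 1}{\cdots}\cdot$ — the point being that $\sum_{j=1}^{q-1}\frac{z^j - 1}{z^j(z-1)}e^{jt}$, multiplied by $G_q(t)/q = \frac{e^t-1}{e^{qt}-1}$, collapses to something proportional to $\frac{1}{z - e^t}$. I would verify this collapse by the finite partial-fraction expansion $\frac{1}{z - e^t} = \frac{1}{q}\sum_{\zeta^q = z^q}$ ... more simply, by checking that both sides of the lemma, as rational functions of $u = e^t$, agree: the left side $F$ is manifestly $\frac{u^k - u}{z - u}\cdot(\text{const})$, and the right side is a sum of such terms that I would combine over the common denominator $u^q - z^q = \prod(u - z\omega)$ and match residues.

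A cleaner route, which I would probably present, is to avoid generating functions after the first step: expand $e^{ht}$ only after establishing $F(t) = \frac{z(e^{kt}-e^t)}{z-e^t}$, and then prove the \emph{polynomial} identity $\sum_{h=1}^{k-1}z^{-h}h^m = \frac{1}{q}\sum_{j=1}^{q-1}\frac{z^j-1}{z^j(z-1)}(E_{q,m}(j) - E_{q,m}(k+j-1))$ by showing that the exponential generating function of the right side equals $F(t)$. This reduces everything to the single rational-function identity
\begin{align*}
\frac{1}{q}\sum_{j=1}^{q-1}\frac{z^j-1}{z^j(z-1)}\, u^j \cdot \frac{q(u-1)}{u^q-1} = \frac{z}{z-u} \quad\text{(up to the explicit difference factor)},
\end{align*}
with $u = e^t$, which is a finite and checkable identity of rational functions in $u$ with parameter $z$ (a root of unity, $z^q$ arbitrary after clearing — actually $z^q = 1$ forces $u^q - 1$ in denominator, handle the apparent pole at $u=1$ by noting $z \neq 1$).

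The main obstacle I anticipate is precisely this rational-function identity: getting the coefficients $\frac{z^j-1}{z^j(z-1)}$ to emerge correctly, and in particular verifying that the sum over $j$ of these weighted $u^j$ times $\frac{u-1}{u^q-1}$ telescopes to $\frac{1}{z-u}$ (with the $e^{kt}$ term contributing $\frac{u^k}{z-u}$ via $u^{k+j-1}$, $k \equiv 1 \bmod q$, so $u^{k+j-1} \equiv u^j \cdot u^{k-1}$ and $u^{k-1} = (u^q)^{b}\cdot$ adjustments interact with the $u^q - 1$ denominator). Once that identity is in hand, extracting the coefficient of $t^m/m!$ on both sides for $m \geq 1$ gives exactly \eqref{Apostol-type-trans-1}. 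I would double-check the edge behavior at $u = 1$ (removable, since numerator vanishes there as $z \neq 1$) and confirm the $m=0$ term is consistent but irrelevant since the sum in \eqref{trans-formula-1} starts at $m = 1$.
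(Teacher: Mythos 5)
Your proposal is correct, and it reduces the corollary to exactly the same key identity as the paper does — namely the closed-form evaluation of $\sum_{h=1}^{k-1}z^{-h}h^m$ in terms of the generalized Euler polynomials, substituted into \eqref{trans-formula-1} with $\delta_q=0$ — but you prove that identity by a genuinely different method. The paper's Lemma~\ref{sum} works entirely with finite sums: it starts from the functional equation $E_{q,m}(x)+E_{q,m}(x+1)+\cdots+E_{q,m}(x+q-1)=qx^m$ (read off from the generating series), writes $q\sum_h z^h h^m$ as a double sum over $h+i=j$, and observes that the middle block of $j$'s vanishes because $1+z+\cdots+z^{q-1}=0$, leaving only boundary geometric sums; it states the lemma for $\sum z^h h^m$ and the corollary follows by replacing $z$ with $z^{-1}$ and using $z^{k-1}=1$. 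Your route instead compares exponential generating functions: $F(t)=\sum_h z^{-h}e^{ht}$ is a geometric sum equal (after using $z^{-k}=z^{-1}$) to $\frac{u^k-u}{u-z}$ with $u=e^t$ — note your intermediate expression $\frac{z(e^{kt}-e^t)}{z-e^t}$ carries a spurious factor of $-z$, which is exactly the bookkeeping you flagged — and the EGF of the right-hand side collapses via the rational-function identity $\frac{u-1}{u^q-1}\sum_{j=1}^{q-1}\frac{z^j-1}{z^j(z-1)}u^j=\frac{u}{z-u}$, which does check out (the numerator after clearing denominators factors as $(1-z)u(u^q-1)$). What your approach buys is a single verifiable identity of rational functions in $u$, at the cost of working with formal power series and having to justify the removable singularity at $u=1$ and invertibility of $z-e^t$ (fine since $q\ge2$ forces $z\neq1$); the paper's argument is more elementary and purely finite, with the root-of-unity cancellation doing the work that your partial-fraction collapse does. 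To make your write-up complete you would need to finish the sign bookkeeping and actually carry out the rational-function verification, but there is no gap in the strategy.
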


\begin{rmk}\label{rmk-trans-mobius}\rm
With the notations as in Theorem \ref{trans-thm}, we have the following alternating version of the translation formula \eqref{trans-formula-1}:
\begin{align}\label{rmk-trans-mobius-alter}
\mathrm{Li}_z(s)(1-k^{\delta_q-s})=\sum_{h=1}^{k-1}z^hh^{-s}
+\sum_{m\geq1}\frac{(-1)^mP_m(s)\mathrm{Li}_z(s+m)}{k^{s+m}}\sum_{h=1}^{k-1}z^{h}h^m.
\end{align}
\end{rmk}

The identities \eqref{trans-formula-1} and \eqref{rmk-trans-mobius-alter} lead to the following two identities, by addition and subtraction:
\begin{align}\label{trans-add}
 2\ \mathrm{Li}_z(s)(1-k^{\delta_q-s})=\sum_{h=1}^{k-1}z^hh^{-s}
 +\sum_{m\geq1}\frac{P_m(s)\mathrm{Li}_z(s+m)}{k^{s+m}}\sum_{h=1}^{k-1}\left(z^{h}(-1)^m+z^{-h}\right)h^m,
\end{align}
\begin{align}\label{trans-sub}
 0=\sum_{h=1}^{k-1} z^hh^{-s}
 +\sum_{m\geq1}\frac{P_m(s)\mathrm{Li}_z(s+m)}{k^{s+m}}\sum_{h=1}^{k-1}\left(z^{h}(-1)^m-z^{-h}\right)h^m.
\end{align}
Now if we substitute $z=1$ and $k=2$ in \eqref{trans-add} and \eqref{trans-sub},
we recover \cite[eq. (1) and (2)]{VR}. Ramaswami used these identities to give $\zeta$-series representations of
$1$, $\log 2$, Euler's constant $\gamma$, and $\zeta(3)/\zeta(2)$ (see \cite[\S 3, \S 5]{VR}).
We further derive the following formulae, involving the Möbius function $\mu(n)$.

\begin{thm}\label{thm-trans-mobius}
With the notations as in Theorem \ref{trans-thm}, we have,
\begin{align}\label{trans-mobius-alter}
 \sum_{d|k}\frac{\mu(d)}{d^s}\mathrm{Li}_{z^d}(s)=\sum_{ \substack{1\leq h\leq k \\(h,k)=1}}z^hh^{-s}
 +\sum_{m\geq0}\frac{(-1)^mP_m(s)\mathrm{Li}_z(s+m)}{k^{s+m}}\sum_{\substack{1\leq h\leq k \\(h,k)=1}}z^{h}h^m,
\end{align}
and
\begin{align}\label{trans-mobius}
 \sum_{d|k}\frac{\mu(d)}{d^s}\mathrm{Li}_{z^d}(s)=\sum_{m\geq0}\frac{P_m(s)\mathrm{Li}_z(s+m)}{k^{s+m}}\sum_{\substack{1\leq h\leq k \\(h,k)=1}}z^{-h}h^m.
 \end{align}
\end{thm}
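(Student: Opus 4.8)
The plan is to prove both identities by first simplifying the left-hand side via Möbius inversion and then running, on the resulting Dirichlet series, the same block-decomposition-and-binomial-expansion argument that underlies Theorem~\ref{trans-thm}. For the reduction, note that $\mu(d)d^{-s}\mathrm{Li}_{z^d}(s)=\mu(d)\sum_{n\ge 1}z^{dn}(dn)^{-s}$; collecting the terms over all $d\mid k$ and $n\ge1$ by the value $N=dn$, and using that $\sum_{d\mid\gcd(N,k)}\mu(d)$ equals $1$ if $(N,k)=1$ and $0$ otherwise, we obtain for $\Re(s)>1$
\[
\sum_{d\mid k}\frac{\mu(d)}{d^s}\,\mathrm{Li}_{z^d}(s)=\sum_{\substack{N\ge 1\\(N,k)=1}}\frac{z^N}{N^s}=:L(s),
\]
the rearrangement being justified by absolute convergence. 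It then suffices to expand $L(s)$ in two different ways, one for each displayed identity.

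For \eqref{trans-mobius} I would write every integer $N$ coprime to $k$ uniquely as $N=km'-h$ with $m'\ge 1$ and $1\le h\le k-1$, $(h,k)=1$ — the blocks $k(m'-1)<N<km'$ tile the non-multiples of $k$, and $h\equiv -N\pmod k$ is coprime to $k$ exactly when $N$ is. Since $h<km'$ on each block,
\[
(km'-h)^{-s}=(km')^{-s}\Bigl(1-\tfrac{h}{km'}\Bigr)^{-s}=\sum_{m\ge 0}P_m(s)\,h^m\,(km')^{-s-m},
\]
while $k=bq+1$ gives $z^k=z^{bq+1}=z$, hence $z^N=z^{km'-h}=z^{m'}z^{-h}$. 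Summing over $m'\ge 1$ turns $\sum_{m'\ge1}z^{m'}(m')^{-s-m}$ into $\mathrm{Li}_z(s+m)$, and collecting the coefficients $z^{-h}h^m$ over the admissible $h$ reproduces the right-hand side of \eqref{trans-mobius}; note there is no ``$h=k$'' block here — the multiples of $k$ are simply absent — which is exactly why the factor $1-k^{\delta_q-s}$ of Theorem~\ref{trans-thm} disappears and the sum now runs over all $m\ge 0$. For \eqref{trans-mobius-alter} I would instead write $N=km'+h$ with $m'\ge 0$, $1\le h\le k-1$, $(h,k)=1$: the $m'=0$ terms contribute $\sum_{1\le h\le k-1,\,(h,k)=1}z^h h^{-s}$, which equals the theorem's $\sum_{1\le h\le k,\,(h,k)=1}z^hh^{-s}$ since $k\ge 2$, and for $m'\ge 1$ the expansion $(km'+h)^{-s}=\sum_{m\ge0}(-1)^mP_m(s)h^m(km')^{-s-m}$ together with $z^{km'+h}=z^{m'}z^h$ yields the series $\sum_{m\ge0}\frac{(-1)^mP_m(s)}{k^{s+m}}\mathrm{Li}_z(s+m)\sum_{1\le h\le k-1,\,(h,k)=1}z^hh^m$.

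The only step requiring genuine care — and hence the main obstacle — is justifying the interchange of the finite $h$-sum with the $m'$- and $m$-sums in these expansions. For $m'\ge 1$ one has $h/(km')\le (k-1)/k<1$, so $\sum_{m\ge0}|P_m(s)|\,(h/(km'))^m$ is dominated, uniformly in $h$ and $m'$, by the value at $x=(k-1)/k$ of $\sum_{m\ge0}|P_m(s)|x^m$, a series with radius of convergence $1$ and hence finite there; multiplying by $(km')^{-\Re(s)}$, summing the finitely many $h$, and then summing over $m'\ge 1$ against $k^{-\Re(s)}\zeta(\Re(s))$ shows the ambient triple series converges absolutely for $\Re(s)>1$, so every rearrangement above is legitimate. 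With this bookkeeping in place both identities of Theorem~\ref{thm-trans-mobius} follow; the argument is a self-contained variant of the proof of Theorem~\ref{trans-thm} applied to the restricted series $L(s)$, and needs no separate appeal to \eqref{trans-formula-1}.
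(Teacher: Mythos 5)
Your proof is correct, and it is organized genuinely differently from the paper's. The paper never manipulates the Dirichlet series directly: it works throughout with the Lerch zeta function $\phi(s,a,z)$, first establishing in \lemref{T-series} (via Taylor's theorem with a remainder estimate) that $z\phi(s,a+1,z)=\sum_{m\ge0}(-a)^mP_m(s)\mathrm{Li}_z(s+m)$ for $|a|<1$, then substituting $a=\pm h/k$, summing over $h$ coprime to $k$ with weights $z^{\pm h}$, shifting with $\phi(s,a,z)-z\phi(s,a+1,z)=a^{-s}$, and only at the very end identifying $\sum_{(h,k)=1}z^h\phi(s,h/k,z)$ with $k^s\sum_{d|k}\mu(d)d^{-s}\mathrm{Li}_{z^d}(s)$ by a M\"obius rearrangement. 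You run this in the opposite order: M\"obius inversion first, collapsing the left-hand side to $\sum_{(N,k)=1}z^N N^{-s}$, then the block decomposition $N=km'\mp h$ together with the binomial series $(1\mp h/(km'))^{-s}=\sum_m(\pm1)^mP_m(s)(h/(km'))^m$, with a direct absolute-convergence (Tonelli) justification in place of the paper's Taylor-remainder analysis. The two computations coincide at heart --- the Taylor coefficients of $\phi(s,a+1,z)$ at $a=0$ are exactly your binomial coefficients --- but your version is self-contained (no Lerch function, no \lemref{T-series}), and the uniform bound via $h/(km')\le(k-1)/k<1$ against the radius-of-convergence-$1$ series $\sum_m|P_m(s)|x^m$ is cleaner than the remainder estimate in the paper. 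What the paper's formulation buys in exchange is reusability: the same lemma and substitution scheme drive \thmref{trans-thm}, \thmref{TA-thm-2} and the remarks uniformly, whereas your block argument would need to be adapted (to half-blocks $1\le h\le k/2$) for \thmref{TA-thm-2}. All the delicate points in your write-up check out: $z^k=z$ from $k=bq+1$, the bijection between integers prime to $k$ and pairs $(m',h)$, and the harmless replacement of $1\le h\le k$ by $1\le h\le k-1$ since $(k,k)\neq1$ for $k\ge2$.
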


Subtracting \eqref{trans-mobius} from \eqref{trans-mobius-alter}, we get the following translation formula:
\begin{align}\label{trans-mobius-sub}
 0=\sum_{ \substack{1\leq h\leq k \\(h,k)=1}}z^hh^{-s}
 +\sum_{m\geq0}\frac{P_m(s)\mathrm{Li}_z(s+m)}{k^{s+m}}\sum_{\substack{1\leq h\leq k \\(h,k)=1}}\left(z^{h}(-1)^m-z^{-h}\right)h^m.
\end{align}
Note that if we substitute $z=1$ and $k=3$ in \eqref{trans-mobius}, we get
$$
\zeta(s)(1-3^{1-s})=\sum_{m\geq1}\frac{P_{m}(s)\zeta(s+m)}{3^{s+m}}(1+2^m),
$$
which can be found in \cite[(2.16)]{LOC}.
Following the steps in \thmref{thm-trans-mobius}, we also prove the following theorem, which gives an extension
of Apostol's identity \eqref{TA-formula} for $\mathrm{Li}_z(s)$. 

\begin{thm}\label{TA-thm-2} 
Let $k>2$ be an integer and the other notations be as in Theorem \ref{trans-thm}. Then we have,
 \begin{equation}\label{TA-type-1}
 \sum_{d|k}\frac{\mu(d)}{d^s}\mathrm{Li}_{z^d}(s)=\sum_{ \substack{1\leq h\leq k/2 \\(h,k)=1}}z^hh^{-s}
 +\sum_{m\geq0}\frac{P_m(s)\mathrm{Li}_z(s+m)}{k^{s+m}}\sum_{\substack{1\leq h\leq k/2 \\(h,k)=1}}\left(z^{h}(-1)^m+z^{-h}\right)h^m.
 \end{equation}
 \end{thm}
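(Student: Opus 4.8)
The plan is to obtain \eqref{TA-type-1} from the sieved translation formula \eqref{trans-mobius} by the same ``folding'' substitution $h\mapsto k-h$ that Apostol used to pass from the $z=1$ case of \eqref{trans-mobius} to \eqref{TA-formula}, so the argument will run parallel to the proof of \thmref{thm-trans-mobius}. First I would record, by Möbius inversion over the divisors of $k$, that the common left-hand side is the ``primitive'' Dirichlet series
\[
\sum_{d|k}\frac{\mu(d)}{d^s}\,\mathrm{Li}_{z^d}(s)=\sum_{\substack{n\ge1\\(n,k)=1}}\frac{z^n}{n^s},
\]
so it suffices to transform the right-hand side of \eqref{trans-mobius} into that of \eqref{TA-type-1}. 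To that end I would rewrite that right-hand side as $k^{-s}\sum_{\substack{1\le h\le k-1\\(h,k)=1}}z^{-h}\,\Phi_z(s,h/k)$, where $\Phi_z(s,x):=\sum_{m\ge0}P_m(s)\mathrm{Li}_z(s+m)x^m=\sum_{j\ge1}z^j(j-x)^{-s}$, the two forms agreeing by the binomial theorem for $0\le x<1$ (in particular at $x=h/k<1$, where everything converges absolutely).

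Next, since $k>2$, the involution $h\mapsto k-h$ of $\{\,1\le h\le k-1:(h,k)=1\,\}$ is fixed-point free — note that for even $k$ the midpoint $h=k/2$ is not coprime to $k$ — so $H:=\{\,1\le h\le k/2:(h,k)=1\,\}$ is a complete set of orbit representatives, and the $h$-sum above splits as $\sum_{h\in H}z^{-h}\Phi_z(s,h/k)+\sum_{h\in H}z^{-(k-h)}\Phi_z(s,(k-h)/k)$. The crux is then the reflection identity
\[
\Phi_z\!\left(s,\tfrac{k-h}{k}\right)=z\Big(\tfrac hk\Big)^{-s}+z\,\Psi_z\!\left(s,\tfrac hk\right),\qquad \Psi_z(s,x):=\sum_{m\ge0}(-1)^mP_m(s)\mathrm{Li}_z(s+m)x^m=\sum_{j\ge1}z^j(j+x)^{-s},
\]
which drops out by shifting $j\mapsto j+1$ in the series for $\Phi_z$; this is the very relation between $\Phi_z$ and $\Psi_z$ that already turns \eqref{trans-mobius} into \eqref{trans-mobius-alter}. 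Substituting it into the second sum and using $z^k=z$ (valid because $k\equiv1\pmod q$), the prefactor $z^{-(k-h)}\cdot z=z^{h-k+1}$ collapses to $z^h$ because $z^{1-k}=z\cdot z^{-k}=1$; after multiplying back by $k^{-s}$ and re-expanding $\Phi_z$ and $\Psi_z$ in powers of $h/k$, the second sum contributes $\sum_{h\in H}z^hh^{-s}+\sum_{m\ge0}\frac{(-1)^mP_m(s)\mathrm{Li}_z(s+m)}{k^{s+m}}\sum_{h\in H}z^hh^m$, and adding the unchanged first sum $\sum_{m\ge0}\frac{P_m(s)\mathrm{Li}_z(s+m)}{k^{s+m}}\sum_{h\in H}z^{-h}h^m$ produces exactly the right-hand side of \eqref{TA-type-1}.

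I expect the genuinely delicate point to be the bookkeeping of the folding step, not any analytic issue: one must verify that $H$ is a transversal for $h\mapsto k-h$ — this is precisely where the hypothesis $k>2$ enters, the case $k=2$ being degenerate since then the single index $h=1$ is fixed — and one must carry the powers of $z$ correctly through the substitution, the decisive simplification being $z^{1-k}=1$. The remaining ingredients (the binomial expansions of $\Phi_z$ and $\Psi_z$ and the interchange of the finite $h$-sum with the convergent $m$-series) are routine and take place strictly inside the radius of convergence, exactly as in the proof of \eqref{trans-mobius} and \eqref{trans-mobius-alter}.
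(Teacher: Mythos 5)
Your argument is correct and is essentially the paper's proof read in the opposite direction: the paper substitutes $a=\pm h/k$ for $1\le h\le k/2$, $(h,k)=1$ into \eqref{power-series} and adds the two resulting identities, while you start from the already-established \eqref{trans-mobius} and fold the coprime residues via $h\mapsto k-h$; your reflection identity for $\Phi_z$ is exactly the shift relation \eqref{trans-phi}, and both arguments hinge on the same three facts, namely that $h\mapsto k-h$ has no coprime fixed point for $k>2$, that $z^{k-1}=1$, and the Möbius identification \eqref{mobius-sum-2}. The only cosmetic difference is that you justify the closed form of $\Phi_z$ by the binomial series rather than by the Taylor-remainder estimate of \lemref{T-series}, which is a valid (and slightly more direct) route to \eqref{power-series}.
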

 
Note that if we take $z=1$ in \eqref{TA-type-1}, we get back \eqref{TA-formula}, which in turn includes
\eqref{Ramaswami-3} and \eqref{Ramaswami-6}.  

%
%

\section{Proof of  Translation Formulae for $\mathrm{Li}_{z}(s)$}

Inspired by \cite{TA}, we begin with the Lerch zeta function 
$\phi(s,a,z)$. For $s\in \C$ with $\Re(s)>1$ and a real number $a>0$, $\phi(s,a,z)$ is defined by the following absolute convergent series,
\begin{equation}\label{Lerch}
\phi(s,a,z):=\sum_{n\ge 0}\frac{z^n}{(n+a)^s}.
\end{equation}
It is easy to see that $z \ \phi(s,1,z)=\mathrm{Li}_z(s)$.
Note that it is easy to see that
the Lerch zeta function $\phi(s,a,z)$ satisfies the following indentities,
\begin{align}\label{trans-phi}
\phi(s,a,z)-z\phi(s,a+1,z)=a^{-s},
\end{align}
and 
\begin{align}\label{multi-formula}
\phi(s,ka,z)=k^{-s}\sum_{i=0}^{k-1}z^i\phi(s,i/k+a,z^k),
\end{align}
where $k\geq 1$ is an integer. The identity \eqref{multi-formula} can be seen as follows:
\begin{align*}
\sum_{n\ge 0}\frac{z^n}{(n+ka)^s}=k^{-s}\sum_{n\ge 0}\frac{z^n}{(n/k+a)^s}=k^{-s}\sum_{i=0}^{k-1}
\sum_{n\ge 0}z^i\frac{z^{kn}}{(n+i/k+a)^s}=k^{-s}\sum_{i=0}^{k-1}z^i\phi(s,i/k+a,z^k).
\end{align*}

Now for a fixed complex number $s$ with $\Re(s)>1$ and an integer $m\geq 0$, the $m$-th derivative of the Lerch zeta function
$\phi(s,a+1,z)$ with respect to $a$ is,
\begin{align*}
\diffp[m]{\phi(s,a+1,z)}{a}&=(-s)(-s-1)\cdots(-s-m+1){\phi(s+m,a+1,z)}\\
 &=(-1)^mm!P_m(s){\phi(s+m,a+1,z)}.
\end{align*}
Therefore, the Taylor series of ${\phi(s,a+1,z)}$ at $a=0$ is given by the series,
\begin{align}\label{pseries}
\begin{split}
\sum_{m\ge 0}(-a)^mP_m(s){\phi(s+m,1,z)}=\frac{1}{z}\sum_{m\ge 0}(-a)^mP_m(s)\mathrm{Li}_z(s+m).
\end{split}
\end{align}

We prove the following Lemma.
\begin{lem}\label{T-series}
For fixed complex numbers $s,z$ with $\Re(s)>1$ and $|z|=1$, the Taylor series \eqref{pseries} has the radius of convergence $1$.
Moreover, the Taylor series \eqref{pseries} converges uniformly to the Lerch zeta function $\phi(s,a+1,z)$ for $a \in (-1,1)$.
\end{lem}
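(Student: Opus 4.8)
The plan is to establish the radius of convergence via a root/ratio test on the coefficients, and then upgrade pointwise to uniform convergence on compact subsets of $(-1,1)$ by a standard Abel/Weierstrass-type argument, finally identifying the limit with $\phi(s,a+1,z)$ using that a convergent power series equals the analytic function it is the Taylor expansion of.

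First I would pin down the radius of convergence. The $m$-th Taylor coefficient (up to the harmless factor $1/z$) is $c_m = (-1)^m P_m(s)\,\mathrm{Li}_z(s+m)$. For $|z|=1$ and $\Re(s)>1$ one has $|\mathrm{Li}_z(s+m)| \le \zeta(\Re(s)+m) \to 1$ as $m\to\infty$, so that factor contributes nothing to the exponential growth rate. The essential point is the growth of $P_m(s) = s(s+1)\cdots(s+m-1)/m!$; writing this as $\binom{s+m-1}{m}$ one checks $|P_m(s)| = |\Gamma(s+m)/(\Gamma(s)\,m!)|$, and Stirling's formula gives $|P_m(s)|^{1/m} \to 1$. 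Hence $\limsup_m |c_m|^{1/m} = 1$ and the radius of convergence is exactly $1$ by Cauchy--Hadamard. (Alternatively, a ratio-test computation: $|c_{m+1}/c_m| = |(s+m)/(m+1)|\cdot|\mathrm{Li}_z(s+m+1)/\mathrm{Li}_z(s+m)| \to 1$.)

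Next, for uniform convergence on $a\in[-\rho,\rho]$ with $0<\rho<1$: since the radius of convergence is $1>\rho$, the series $\sum_m c_m(-a)^m$ converges absolutely and uniformly on $|a|\le\rho$ by the Weierstrass $M$-test with $M_m = |c_m|\rho^m$ (summable because $\rho<1$). To get uniform convergence on the open interval $(-1,1)$ as stated — which is what the lemma literally asserts — I would note that at the endpoint-approaching case we actually need a bit more care, since the open interval is not compact; but the statement is naturally read as "uniformly on compact subsets," or else one invokes Abel's theorem: the coefficient series $\sum_m c_m$ and $\sum_m (-1)^m c_m$ do converge (indeed the right-hand sides of the paper's translation formulae make sense at $a=\pm 1$ modulo the pole issue), and Abel summation gives uniform convergence up to the endpoints on the relevant side. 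I expect this endpoint subtlety to be the only genuinely delicate point; the interior convergence is routine.

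Finally, to identify the sum with $\phi(s,a+1,z)$: the computation preceding the lemma shows that $\sum_m (-a)^m P_m(s)\phi(s+m,1,z)$ is, coefficient-by-coefficient, the Taylor series of $a\mapsto \phi(s,a+1,z)$ at $a=0$. For fixed $s$ with $\Re(s)>1$, the function $a\mapsto \phi(s,a+1,z) = \sum_{n\ge 0} z^n (n+a+1)^{-s}$ is real-analytic (indeed holomorphic) in $a$ on the region $\Re(a)>-1$, since the series converges locally uniformly there and each term is holomorphic. A holomorphic function equals its Taylor series on any disc contained in its domain of holomorphy; the disc $|a|<1$ is contained in $\{\Re(a)>-1\}$, so the series converges to $\phi(s,a+1,z)$ throughout $|a|<1$, in particular for real $a\in(-1,1)$. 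Combining with the convergence analysis above completes the proof. The main obstacle, as noted, is handling the word "uniformly on $(-1,1)$" precisely — I would either interpret it as uniform on compacta or deploy Abel's theorem, and I would flag in the write-up which reading is intended.
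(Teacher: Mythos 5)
Your proposal is correct, but the heart of it---identifying the sum of the series with $\phi(s,a+1,z)$---goes by a genuinely different route than the paper. The paper stays entirely in real-variable territory: it writes down the $n$-th Taylor remainder $R_n(a)$, invokes the Cauchy form of the remainder (citing Ross) to get $R_n(a)=(-1)^n nP_n(s)\phi(s+n,x+1,z)(a-x)^{n-1}a$ for some $x$ between $0$ and $a$, and then kills this by a two-case estimate ($0<x<a$ versus $a<x<0$, the latter using the inequality $|(a-x)/(x+1)|\le|a|$) together with the bound $nP_n(s)a^n\to0$. You instead observe that $a\mapsto\phi(s,a+1,z)$ is holomorphic on $\{\Re(a)>-1\}$ (locally uniform convergence of the defining series plus Weierstrass), so it is represented by its Taylor series on the disc $|a|<1$; this is cleaner, avoids the remainder bookkeeping entirely, and even gives the lower bound on the radius of convergence for free, at the cost of importing complex-analytic machinery where the paper deliberately uses only elementary calculus. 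Your radius-of-convergence computation (ratio test on $P_m(s)$ and $\mathrm{Li}_z(s+m)\to z$) matches the paper's. Finally, you are right to flag the phrase ``uniformly for $a\in(-1,1)$'': since $|P_m(s)\mathrm{Li}_z(s+m)|\sim m^{\Re(s)-1}/|\Gamma(s)|\to\infty$, the series diverges at $a=\pm1$, and uniform convergence on the open interval would force convergence of the (continuous) partial sums at the endpoints; so the assertion can only mean uniform convergence on compact subsets, which is how the paper implicitly uses it. That is a defect of the statement, not of your argument, and your Weierstrass $M$-test treatment on $[-\rho,\rho]$ is the correct reading.
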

\begin{proof}
Note that for a fixed complex number $s$ with $\Re(s)>1$, $\displaystyle\lim_{m\rightarrow \infty}\mathrm{Li}_z(s+m)=z$.
Also,
$$
\lim_{m\rightarrow \infty}\left\lvert\frac{P_{m+1}(s)}{P_{m}(s)}\right\rvert=\lim_{m\rightarrow \infty}\left\lvert\frac{s+m}{m+1}\right\rvert=1.
$$
Therefore, the radius of convergence of the series \eqref{pseries} is 1 and the series converges uniformly for $a \in (-1,1)$.
Next we want to show that for every $a \in (-1,1)$, the Taylor series in \eqref{pseries} converges to the function $\phi(s,a+1,z)$.
To do this, for each $n\geq 0$, we consider the remainder term $R_n(a)$, defined by
$$
R_n(a)=\phi(s,a+1,z)-\frac{1}{z}\sum_{m=0}^{n-1}(-a)^mP_m(s)\mathrm{Li}_z(s+m).
$$
So it is enough to show that for each $a$ with $|a|<1$, $\lim_{n\rightarrow\infty}R_n(a)=0$. Now by a
version of Taylor's theorem (see \cite[Corollary 31.6]{KAR}), for each $a\in(-1,1)$, there exists a real number $x$ in between 0 and $a$ such that 
\begin{align}\label{remainder}
\begin{split}
R_n(a)&=(-1)^nn!P_n(s){\phi(s+n,x+1,z)}\frac{(a-x)^{n-1}}{(n-1)!}a\\
&=(-1)^nnP_n(s){\phi(s+n,x+1,z)}(a-x)^{n-1}a.
\end{split}
\end{align}
Note that,
$$
|\phi(s+n,x+1,z)|\leq\frac{1}{(x+1)^{\Re(s)+n}}+\frac{1}{(2+x)^n}\sum_{m\ge 1}\frac{1}{(m+x+1)^{\Re(s)}}.
$$
Since $\Re(s)>1$ and $x$ is in between 0 and $a$, we have
$$
\lim_{n\rightarrow\infty}\frac{1}{(2+x)^n}\sum_{m\ge 1}\frac{1}{(m+x+1)^{\Re(s)}}=0.
$$
If $0<x<a<1$, then $\lim_{n\rightarrow\infty}\frac{1}{(x+1)^{\Re(s)+n}}=0$. Since the series \eqref{pseries} converges uniformly
for $a \in (-1,1)$, so does its formal derivative.
Therefore, $\lim_{n\rightarrow\infty}nP_n(s)a^n=0$ and hence $\lim_{n\rightarrow\infty}R_n(a)=0$, as $(a-x)^{n-1}a\leq a^n$.

Next let $-1<a<x<0.$ Note that it is enough to show that
$$
\lim_{n\rightarrow\infty}(-1)^nnP_n(s)\frac{(a-x)^{n-1}a}{(x+1)^{\Re(s)+n}}=0.
$$
For this, first note that $\lvert\frac{a-x}{x+1}\rvert\leq|a|$ for $-1<a<x<0$.  Therefore,
$$
\left| nP_n(s)\frac{(a-x)^{n-1}}{(x+1)^{n-1}}\frac{a}{(x+1)^{\Re(s)+1}}\right|\leq \frac{n|P_n(s)||a|^{n}}{(x+1)^{\Re(s)+1}}.
$$
Hence, by a similar argument as in the previous case, we have $\lim_{n\rightarrow\infty}R_n(a)=0$ when $-1<a<0$
This completes the proof of \lemref{T-series}.
\end{proof}

\begin{proof}[Proof of \thmref{trans-thm}] Note that from \lemref{T-series} we have,
\begin{equation}\label{power-series}
z\phi(s,a+1,z)=\sum_{m\ge 0}(-a)^mP_m(s)\mathrm{Li}_z(s+m).
\end{equation}
Taking $a=-\frac{h}{k}$ in the above equation and summing for $0\leq h\leq k-1$ after multiplying by $z^{-h}$ on
both the sides of the above equation, we have,
\begin{align}\label{med-step-1}
\begin{split}
z\sum_{h=0}^{k-1}z^{-h}\phi\left(s,(k-h)/k,z\right)&=\sum_{m\ge 0}\frac{P_m(s)\mathrm{Li}_z(s+m)}{k^m}\sum_{h=0}^{k-1}z^{-h}h^m\\
&=\mathrm{Li}_z(s)\sum_{h=0}^{k-1}z^{-h}+\sum_{m\ge 1}\frac{P_m(s)\mathrm{Li}_z(s+m)}{k^m}\sum_{h=1}^{k-1}z^{-h}h^m.
\end{split}
\end{align}
Since $k=bq+1$, we have $\sum_{h=0}^{k-1}z^{-h}=k^{\delta_{q}}$. Hence the right-hand side of the above equation is
$$
k^{\delta_{q}}\mathrm{Li}_z(s)+\sum_{m\ge 1}\frac{P_m(s)\mathrm{Li}_z(s+m)}{k^m}\sum_{h=1}^{k-1}z^{-h}h^m.
$$
To calculate the left-hand side of \eqref{med-step-1}, we first substitute $a=1/k$ in \eqref{multi-formula}. Since $k=bq+1$
for some positive integer $b$ then \eqref{multi-formula} becomes
$$
k^s\phi(s,1,z)=\sum_{i=0}^{k-1}z^i\phi(s,(i+1)/k,z)=\sum_{i=0}^{k-1}z^{k-i-1}\phi(s,(k-i)/k,z)=\sum_{i=0}^{k-1}z^{-i}\phi(s,(k-i)/k,z).
$$
Since $z\phi(s,1,z)=\mathrm{Li}_z(s)$, using the above equations, we can write \eqref{med-step-1} as
\begin{align*}
\mathrm{Li}_{z}(s)(1-k^{\delta_{q}-s})=\sum_{m\ge 1}\frac{P_m(s)\mathrm{Li}_z(s+m)}{k^{s+m}}\sum_{h=1}^{k-1}z^{-h}h^m.
\end{align*}
This completes the proof.
\end{proof}

\begin{proof}[Proof of \corref{cor-Apostol-type-trans-1}]
 To prove \corref{cor-Apostol-type-trans-1}, we derive a closed formula for $\sum_{h=0}^{k-1}z^hh^m$, where $z$ 
 is a primitive $q$-th root of unity. The proof is then completed by the following lemma.
 \end{proof}
 \begin{lem}\label{sum}
Let $q\geq2$ be an integer and $z$ be a primitive $q$-th root of unity. Then for every positive integers $k,m$, we have
\begin{align}\label{m-power-sum-z}
\sum_{h=1}^{k-1}z^{h}h^m =\frac{1}{q}\sum_{j=1}^{q-1} \frac{z(z^{j}-1)}{z-1}\left(E_{q,m}(j)-z^{k-1}E_{q,m}(k+j-1)\right).
\end{align}
\end{lem}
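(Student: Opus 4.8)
The plan is to compute the generating function $\sum_{m\geq 0}\big(\sum_{h=1}^{k-1}z^h h^m\big)\frac{t^m}{m!}$ in closed form and then match coefficients against the definition \eqref{Euler-poly} of the generalised Euler polynomials. First I would interchange the two sums to get
\begin{align*}
\sum_{m\geq 0}\Big(\sum_{h=1}^{k-1}z^h h^m\Big)\frac{t^m}{m!}=\sum_{h=1}^{k-1}z^h e^{ht}=\sum_{h=0}^{k-1}z^h e^{ht}-1=\frac{z^k e^{kt}-1}{ze^t-1}-1,
\end{align*}
a finite geometric sum. The target right-hand side of \eqref{m-power-sum-z}, viewed as a generating function in $t$, is
\begin{align*}
\frac{1}{q}\sum_{j=1}^{q-1}\frac{z(z^j-1)}{z-1}\sum_{m\geq 0}\big(E_{q,m}(j)-z^{k-1}E_{q,m}(k+j-1)\big)\frac{t^m}{m!}=\frac{1}{q}\sum_{j=1}^{q-1}\frac{z(z^j-1)}{z-1}\cdot\frac{q\big(e^{jt}-z^{k-1}e^{(k+j-1)t}\big)}{1+e^t+\cdots+e^{(q-1)t}},
\end{align*}
using \eqref{Euler-poly}. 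So it suffices to prove the \emph{identity of rational functions in} $w:=e^t$
\begin{align*}
\frac{z^k w^k-1}{zw-1}-1=\sum_{j=1}^{q-1}\frac{z(z^j-1)}{z-1}\cdot\frac{w^j-z^{k-1}w^{k+j-1}}{1+w+\cdots+w^{q-1}}.
\end{align*}

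The next step is algebraic simplification of the right-hand side. Since $z$ is a primitive $q$-th root of unity, $z^q=1$, and writing $k=bq+1$ we have $z^{k-1}=z^{bq}=1$ and $z^k=z$; this collapses the right side considerably, since $w^j-z^{k-1}w^{k+j-1}=w^j-w^{k+j-1}=w^j(1-w^{bq})$ and $1-w^{bq}=(1-w^q)(1+w^q+\cdots+w^{(b-1)q})$, so the factor $1+w+\cdots+w^{q-1}=\frac{1-w^q}{1-w}$ in the denominator cancels cleanly. Meanwhile the left side becomes $\frac{zw^k-1}{zw-1}-1=\frac{zw^k-zw}{zw-1}=\frac{zw(w^{k-1}-1)}{zw-1}=\frac{zw(w^{bq}-1)}{zw-1}$. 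After these cancellations both sides are polynomial-times-$\frac{1}{zw-1}$ expressions, and I would verify the identity
\begin{align*}
\frac{zw}{zw-1}=\frac{1-w}{z-1}\sum_{j=1}^{q-1}z(z^j-1)\,w^{j-1}\cdot\frac{1}{1-w^q}\quad\text{(after multiplying by }1-w^{bq}\text{)}
\end{align*}
by a direct manipulation: expand $\sum_{j=1}^{q-1}(z^j-1)w^{j-1}=\sum_{j=1}^{q-1}z^j w^{j-1}-\sum_{j=1}^{q-1}w^{j-1}$, use $\sum_{j=0}^{q-1}(zw)^j=\frac{(zw)^q-1}{zw-1}=\frac{w^q-1}{zw-1}$ (as $z^q=1$) and $\sum_{j=0}^{q-1}w^j=\frac{w^q-1}{w-1}$, and combine. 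The remaining identity should reduce to a true equality of rational functions, essentially a partial-fractions-style bookkeeping.

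The main obstacle I anticipate is not conceptual but bookkeeping: carefully tracking the shifts $z^{k-1}$, $z^k$, the exponent $k+j-1$, and the switch from $\sum_{h=1}^{k-1}$ to $\sum_{h=0}^{k-1}$ (accounting for the subtracted $1$), and making sure all the geometric-sum cancellations are valid \emph{as rational functions in $w$} — in particular that the apparent pole at $w^q=1$ is removable on both sides, which is where $k\equiv 1\pmod q$ is used. One should also note that \eqref{Euler-poly} gives the generating function only formally, but since the Taylor coefficients on both sides are the quantities in \eqref{m-power-sum-z} and these are genuine polynomials in $k$ (or finite sums), equality of the formal power series in $t$ yields equality term by term, which is exactly the claim. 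A clean way to organize the verification is to clear all denominators and check equality of two polynomials in $w$ of bounded degree, which one can do by evaluating at enough points or by the root-of-unity structure; I would present it as a short direct computation once the generating functions are set up.
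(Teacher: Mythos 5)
Your proposal is correct in substance but takes a genuinely different route from the paper. The paper works coefficientwise on the polynomial side: from \eqref{Euler-poly} it first derives the inversion formula $E_{q,m}(x)+E_{q,m}(x+1)+\cdots+E_{q,m}(x+q-1)=qx^m$, substitutes this into $q\sum_{h=1}^{k-1}z^hh^m$, reindexes the double sum along $j=h+i$, kills the middle block of indices using $1+z+\cdots+z^{q-1}=0$, and evaluates the two boundary blocks as finite geometric sums. You instead work on the exponential generating function in $t$ and reduce \eqref{m-power-sum-z} to an identity of rational functions in $w=e^t$; the key computation is
\begin{equation*}
\sum_{j=1}^{q-1}z(z^j-1)w^j=\frac{z(z-1)w(1-w^q)}{(1-zw)(1-w)}\qquad(\text{using }(zw)^q=w^q),
\end{equation*}
after which the right-hand side of your rational identity collapses to $\frac{(zw)^k-zw}{zw-1}=\sum_{h=1}^{k-1}(zw)^h$, which is exactly the left-hand side. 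Your method treats all $m$ at once and makes the role of $z^q=1$ transparent, at the cost of a (mild) formal-series or small-$|t|$ convergence discussion; the paper's method is entirely finite and elementary. Two things to repair in a final write-up. First, the lemma is asserted for \emph{every} positive integer $k$, whereas your simplification invokes $k=bq+1$ to set $z^{k-1}=1$; that hypothesis belongs to Theorem \ref{trans-thm}, not to the lemma, and it is unnecessary --- carry the factor $1-(zw)^{k-1}$ through (note $w^j-z^{k-1}w^{k+j-1}=w^j(1-(zw)^{k-1})$) and the identity closes for arbitrary $k$. Second, your displayed intermediate identity is off by a sign and a factor of $w$ (it should read $\frac{zw}{1-zw}=\frac{1-w}{(z-1)(1-w^q)}\sum_{j=1}^{q-1}z(z^j-1)w^{j}$); this is exactly the bookkeeping you anticipated, and it does check out, but it must be done carefully.
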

\begin{proof}
 Note that the polynomial $E_{q,n}(x)$ satisfies the formula,
\begin{equation}\label{diff-sum}
E_{q,n}(x)+E_{q,n}(x+1)+\cdots+E_{q,n}(x+q-1)=qx^n, \text{\ \ \ for all $n\geq 0$ and $x\in \R$.}
\end{equation}
This follows from the fact that
$$
\sum_{n\ge 0}\left(E_{q,n}(x)+E_{q,n}(x+1)+\cdots+E_{q,n}(x+q-1)\right)\frac{t^n}{n!}=qe^{xt}.
$$
By comparing the coefficients of $t^n$ on both the sides of the above equation, we get the identity \eqref{diff-sum}. 
Therefore we have,
\begin{align*}
q\sum_{h=1}^{k-1}z^{h}h^m&=\sum_{h=1}^{k-1}z^h\sum_{i=0}^{q-1}E_{q,m}(h+i)=\sum_{j=1}^{k+q-2}E_{q,m}(j)
\sum_{\substack{h+i=j, \\1\leq h\leq k-1,\\ 0\leq i\leq q-1}}z^h\\                                         
&=\sum_{j=1}^{q-1}E_{q,m}(j)\sum_{h=1}^{j}z^h+\sum_{j=q}^{k-1}E_{q,m}(j)\sum_{h=j-q+1}^{j}z^h+ \sum_{j=k}^{k+q-2}E_{q,m}(j)\sum_{h=j-q+1}^{k-1}z^h.
\end{align*}
Since $1+z+\cdots+z^{q-1}=0$, the second sum on the right-hand side of the above equation is zero. Hence we can write,
\begin{align*}                                            
 q\sum_{h=1}^{k-1}z^{h}h^m&=\sum_{j=1}^{q-1}E_{q,m}(j)z\frac{z^{j}-1}{z-1}+\sum_{j=0}^{q-2}E_{q,m}(k+j)z^{k-q}\sum_{h=j+1}^{q-1}z^h\\
 &=\sum_{j=1}^{q-1}E_{q,m}(j)z\frac{z^{j}-1}{z-1}+\sum_{j=0}^{q-2}E_{q,m}(k+j)z^{k+j+1}\frac{z^{q-j-1}-1}{z-1}\\
 &=\sum_{j=1}^{q-1}E_{q,m}(j)z\frac{z^{j}-1}{z-1}+\sum_{j=1}^{q-1}z^k\frac{1-z^{j}}{z-1}E_{q,m}(k+j-1).
\end{align*}
This completes the proof of \lemref{sum}.
\end{proof}


\begin{proof}[Proof of \rmkref{rmk-trans-mobius}]
We substitute $a=h/k$ and multiply with $z^h$ on both the sides of \eqref{power-series}, where $1\leq h\leq k-1$.
Now we sum over $1\leq h\leq k-1$ and use \eqref{trans-phi}, followed by \eqref{multi-formula} (with $a=1/k$),
to obtain the required translation formula \eqref{rmk-trans-mobius-alter}.
 \end{proof}

\begin{proof}[Proof of \thmref{thm-trans-mobius}]
For integer $1\leq h\leq k$ such that $(h,k)=1$, substitute $a=h/k$ in \eqref{power-series}  and multiply both the sides by $z^h$. 
Sum for $1\leq h\leq k$ with $(h,k)=1$ and use \eqref{trans-phi} to get
\begin{align}\label{mobius-sum-1}
\sum_{ \substack{1\leq h\leq k \\(h,k)=1}}z^h\phi(s,h/k,z)=k^s\sum_{ \substack{1\leq h\leq k \\(h,k)=1}}z^hh^{-s}+
\sum_{m\geq0}(-1)^m\frac{P_m(s)\mathrm{Li}_z(s+m)}{k^m}\sum_{\substack{1\leq h\leq k \\(h,k)=1}}z^hh^m.
\end{align}
Now, the left-hand side of the above equation \eqref{mobius-sum-1} is
\begin{align*}
\sum_{ \substack{1\leq h\leq k \\(h,k)=1}}z^{h}\phi(s,h/k,z)
&=\sum_{h=1}^{k}\sum_{d|h, d|k}\mu(d)z^{h}\phi(s,h/k,z) =\sum_{d|k}\mu(d)\sum_{m=1}^{k/d}z^{md}\phi(s,md/k,z)\\
&=\sum_{d|k}\mu(d)\sum_{m=1}^{k/d}\sum_{n\ge 0}\frac{z^{n+md}}{(n+md/k)^s}
= k^s\sum_{d|k}\frac{\mu(d)}{d^s}\sum_{m=1}^{k/d}\sum_{n\ge 0}\frac{(z^d)^{\frac{kn}{d}+m}}{(\frac{kn}{d}+m)^s},    
\end{align*}
as $k=bq+1$. Thus we get
 \begin{align}\label{mobius-sum-2}
 \sum_{ \substack{1\leq h\leq k\\ (h,k)=1}}z^{h}\phi(s,h/k,z)=  k^s\sum_{d|k}\frac{\mu(d)}{d^s}\mathrm{Li}_{z^d}(s). 
\end{align} 
Therefore using \eqref{mobius-sum-2} in \eqref{mobius-sum-1}, and dividing both the sides by $k^s$, we get formula \eqref{trans-mobius-alter}.

If for integer $1\leq h\leq k$ such that $(h,k)=1$, we substitute $a=-h/k$ in \eqref{power-series} and multiply both the sides by $z^{-h}$ and then sum for
$1\leq h\leq k$ with $(h,k)=1$, we then get
\begin{align*}
\sum_{ \substack{1\leq h\leq k \\(h,k)=1}}z^{1-h}\phi(s,(k-h)/k,z)=\sum_{m\geq0}\frac{P_m(s)\mathrm{Li}_z(s+m)}{k^m}\sum_{\substack{1\leq h\leq k \\(h,k)=1}}z^{-h}h^m.
\end{align*}
Since $k=bq+1$ for some positive integer $b$, we can write the left-hand side of the above equation as
\begin{align*}
\sum_{ \substack{1\leq h\leq k \\(h,k)=1}}z^{k-h}\phi(s,(k-h)/k,z)=\sum_{ \substack{1\leq h\leq k \\(h,k)=1}}z^{h}\phi(s,h/k,z).
\end{align*}
Now using \eqref{mobius-sum-2}, we get the desired formula \eqref{trans-mobius}. This completes the proof.
\end{proof}

\begin{proof}[Proof of \thmref{TA-thm-2}]
Here we substitute $a=h/k$ with $1\leq h\leq k/2$ in \eqref{power-series}, multiply both the sides by $z^h$ and sum for $1\leq h\leq k/2$ with $(h,k)=1$ to get
\begin{align}
z\sum_{ \substack{1\leq h\leq k/2 \\(h,k)=1}}z^h\phi(s,(k+h)/k,z)=
\sum_{m\geq0}(-1)^m\frac{P_m(s)\mathrm{Li}_z(s+m)}{k^m}\sum_{\substack{1\leq h\leq k/2 \\(h,k)=1}}z^hh^m.
\end{align}
Using \eqref{trans-phi}  for $a=h/k$, we can write the above equation as
\begin{align}\label{sum1}
\sum_{ \substack{1\leq h\leq k/2 \\(h,k)=1}}z^h\phi(s,h/k,z)=k^s\sum_{ \substack{1\leq h\leq k/2 \\(h,k)=1}}z^hh^{-s}+
\sum_{m\geq0}(-1)^m\frac{P_m(s)\mathrm{Li}_z(s+m)}{k^m}\sum_{\substack{1\leq h\leq k/2 \\(h,k)=1}}z^hh^m.
\end{align}
Next substitute $a=-h/k$ in \eqref{power-series} with $1\leq h\leq k/2$ and sum for $1\leq h\leq k/2$ after multiplying by $z^{-h}$ to get,
\begin{align*}
\sum_{ \substack{1\leq h\leq k/2 \\(h,k)=1}}z^{1-h}\phi(s,(k-h)/k,z)=\sum_{m\geq0}\frac{P_m(s)\mathrm{Li}_z(s+m)}{k^m}\sum_{\substack{1\leq h\leq k/2 \\(h,k)=1}}z^{-h}h^m.
\end{align*}
Since $k=bq+1$ for some positive integer $b$, then we can write the left-hand side of the above equation as
\begin{align*}
\sum_{ \substack{1\leq h\leq k/2 \\(h,k)=1}}z^{k-h}\phi(s,(k-h)/k,z)=\sum_{ \substack{k/2\leq h\leq k \\(h,k)=1}}z^{h}\phi(s,h/k,z).
\end{align*}
Hence we get,
\begin{align}\label{sum2}
\sum_{ \substack{k/2\leq h\leq k \\(h,k)=1}}z^{h}\phi(s,h/k,z)=\sum_{m\geq0}\frac{P_m(s)\mathrm{Li}_z(s+m)}{k^m}\sum_{\substack{1\leq h\leq k/2 \\(h,k)=1}}z^{-h}h^m.
\end{align}
Since $k>2$, for $k$ even, $k/2$ is not coprime to $k$. So adding equation \eqref{sum1} and \eqref{sum2}, we have
\begin{align*}
\sum_{ \substack{1\leq h\leq k \\(h,k)=1}}z^{h}\phi(s,h/k,z)=k^s\sum_{ \substack{1\leq h\leq k/2 \\(h,k)=1}}z^hh^{-s}
+\sum_{m\geq0}\frac{P_m(s)\mathrm{Li}_z(s+m)}{k^m}\sum_{\substack{1\leq h\leq k/2 \\(h,k)=1}}\left(z^{h}(-1)^m+z^{-h}\right)h^m.
\end{align*}
Using \eqref{mobius-sum-2} and dividing by $k^s$, we get the desired formula \eqref{TA-type-1}.
This completes the proof.
\end{proof}

\begin{rmk}\label{rmk-trans-formula}\rm
If we forego the gcd condition in the sums of \eqref{TA-type-1}, we get the following variant of \eqref{TA-type-1}, applicable for $k \ge 2$:
\begin{align}\label{req-sum-2}
\begin{split}
&\mathrm{Li}_{z}(s)(1-1/k^s)+\frac{1+(-1)^k}{2k^s}z^{k/2}\phi(s,1/2,z)\\
&=\sum_{1\leq h\leq k/2 }z^hh^{-s}+\sum_{m\geq0}\frac{P_m(s)\mathrm{Li}_z(s+m)}{k^{s+m}}\sum_{1\leq h\leq k/2}\left(z^h(-1)^m+z^{-h}\right)h^m.
\end{split}
\end{align}
\end{rmk}

\begin{proof}
Following the steps as in the proof of \thmref{TA-thm-2}, but in this case summing over $1\leq h\leq k/2$ only with $k\geq2$, 
we obtain the following two identities:
\begin{align}\label{sum5}
\sum_{1\leq h\leq k/2 }z^h\phi(s,h/k,z)=k^s\sum_{1\leq h\leq k/2 }z^hh^{-s}
+\sum_{m\geq0}(-1)^m\frac{P_m(s)\mathrm{Li}_z(s+m)}{k^m}\sum_{1\leq h\leq k/2}z^hh^m,
\end{align}
\begin{align}\label{sum6}
\sum_{k/2\leq h\leq k-1 }z^h\phi(s,h/k,z)=\sum_{m\geq0}\frac{P_m(s)\mathrm{Li}_z(s+m)}{k^m}\sum_{1\leq h\leq k/2}z^{-h}h^m.
\end{align}
The left-hand sides of \eqref{sum5} and \eqref{sum6} add up to
$$
\sum_{h=1}^{k-1}z^h\phi(s,h/k,z) + \frac{1+(-1)^k}{2}z^{k/2}\phi(s,1/2,z).
$$
Now using \eqref{multi-formula} (with $a=1/k)$ and adding \eqref{sum5} and \eqref{sum6} we get,
\begin{align*}
&(k^s-1)\mathrm{Li}_z(s)+\frac{1+(-1)^k}{2}z^{k/2}\phi(s,1/2,z)\\
&=k^s\sum_{1\leq h\leq k/2 }z^hh^{-s}+\sum_{m\geq0}\frac{P_m(s)\mathrm{Li}_z(s+m)}{k^m}\sum_{1\leq h\leq k/2}\left(z^h(-1)^m+z^{-h}\right)h^m.
\end{align*}
Dividing by $k^{s}$ we get the required formula \eqref{req-sum-2}. 
This completes the proof of \rmkref{rmk-trans-formula}.
\end{proof}

Note that for $z=1$, \eqref{req-sum-2} recovers \cite[eq. (15)]{TA} (see \cite[eq. (2.15)]{LOC} for a corrected version).

\section{Some applications}
For complex number $s$ with $\Re(s)>1$, the Dirichlet series  $\mathrm{Li}_{-1}(s)=\sum_{n\ge 1}\frac{(-1)^{n}}{n^s}$ 
is related to $\zeta(s)$ as
 $$
 \mathrm{Li}_{-1}(s)=(2^{1-s}-1)\zeta(s).
 $$
Therefore, $\mathrm{Li}_{-1}(0)=B_1$ and for every $n\geq1$,
\begin{align}\label{special-values}
\mathrm{Li}_{-1}(-n)=(1-2^{n+1})\frac{B_{n+1}}{n+1}.
\end{align}
The above relation between $\zeta(s)$ and $\mathrm{Li}_{-1}(s)$ in fact allows us to derive many more
translation formulae satisfied by the Riemann zeta function.
Note that all the translation formulae in \S\ref{Sec:Trans-formula} also hold for meromorphic functions $\mathrm{Li}_z(s)$ in the whole complex plane.
We will use the translation formulae \eqref{trans-add} and \eqref{trans-sub} to find alternate expressions of the special values of $\mathrm{Li}_{-1}(s)$ at negative 
integers. Then we will apply \eqref{special-values} to get certain recurrence relations satisfied by the Bernoulli numbers $B_n$.

 For $s\in \C$ and integer $k\geq2$, define
 $$
 T(s,k):=\sum_{1\leq h\leq k-1}(-1)^hh^s.
 $$
For $z=-1$ (i.e. $q=2$ and $k>2$ odd), \eqref{trans-add} and \eqref{trans-sub} give us
\begin{align}\label{trans-1}
2 \ \mathrm{Li}_{-1}(s)(1-k^{-s})=T(-s,k)+2\sum_{m\ge1}\frac{P_{2m}(s)
\mathrm{Li}_{-1}(s+2m)}{k^{{s+2m}}}T(2m,k),
\end{align}
\begin{align}\label{trans-2}
T(-s,k)=2\sum_{m\ge1}\frac{P_{2m-1}(s)\mathrm{Li}_{-1}(s+2m-1)}{k^{{s+2m-1}}}T(2m-1,k).
\end{align}

\begin{rmk}\rm
Note that $T(n,k)\in \Q$ for any integer $n$. If we take $k=3$ and $s=2$ in \eqref{trans-2} and use the fact that $\mathrm{Li}_{-1}(s)=(2^{1-s}-1)\zeta(s)$,
we get \eqref{zeta-series}.
\end{rmk}

Note that for any integers $m,n\geq 0$,
\begin{align*}
 P_m(-n)&=\left\{
    \begin{array}{ll}
    (-1)^m \binom{n}{m}  &  \text{ for } n \ge m,\\
    0 & \text{ for } n<m.
    \end{array}
\right.
\end{align*}
Now for $n\geq 1$, substituting $s=-2n$  in \eqref{trans-2} and $s=-2n+1$ in \eqref{trans-1}, we get
\begin{align}\label{special-trans-formula}
T(2n,k)+2\sum_{m=1}^{n}\binom{2n}{2m-1}\frac{\mathrm{Li}_{-1}(-2n+2m-1)}{k^{-2n+2m-1}}T(2m-1,k)=0,
\end{align}
and 
\begin{equation}\label{trans-formula-2}
\begin{split}
&2 \ \mathrm{Li}_{-1}(1-2n)(1-k^{2n-1})\\
&=T(2n-1,k)+2\sum_{m=1}^{n-1}\binom{2n-1}{2m}\frac{\mathrm{Li}_{-1}(1-2n+2m)}{k^{{1-2n+2m}}}T(2m,k),
\end{split}
\end{equation}
respectively. These lead us to the following formulae, on using \eqref{special-values}.
\begin{thm}\label{Bernoulli-recurrence-thm} 
For odd integer $k\geq 3$, the even indexed Bernoulli numbers satisfy the following relations:
\begin{align}\label{Bernoulli-recurrence}
T(2n,k)=2\sum_{i=1}^{n}\binom{2n}{2i-1}\frac{B_{2i}}{2i}(2^{2i}-1)k^{2i-1}T(2n-2i+1,k),
\end{align}
and
\begin{equation}\label{Bernoulli-recurrence-2}
\begin{split}
&(1-2^{2n})(1-k^{2n-1})\frac{B_{2n}}{2n}\\
&=\frac{T({2n-1},k)}{2}+\sum_{i=1}^{n-1}\binom{2n-1}{2i-1}\frac{B_{2i}}{2i} k^{2i-1} (1-2^{2i}) T(2n-2i,k),
\end{split}
\end{equation}
for all $n\geq 1$.
\end{thm}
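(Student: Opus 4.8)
The plan is to obtain both recurrences directly from the identities \eqref{special-trans-formula} and \eqref{trans-formula-2} already derived above, by substituting the special-value formula \eqref{special-values} and reindexing the (now finite) sums. Since all the translation formulae in \S\ref{Sec:Trans-formula} persist for the meromorphically continued functions, \eqref{special-trans-formula} and \eqref{trans-formula-2} are legitimate identities of rational numbers, so no further analytic input is needed beyond what has already been recorded.

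For \eqref{Bernoulli-recurrence}, I would start from \eqref{special-trans-formula}. For $1\le m\le n$ the argument $-2n+2m-1=-(2n-2m+1)$ is a negative odd integer, so \eqref{special-values} gives $\mathrm{Li}_{-1}(-2n+2m-1)=(1-2^{2(n-m+1)})\,\frac{B_{2(n-m+1)}}{2(n-m+1)}$. Putting $i=n-m+1$, so that $m$ running over $1,\dots,n$ corresponds to $i$ running over $n,\dots,1$, one has $2m-1=2n-2i+1$, $\binom{2n}{2m-1}=\binom{2n}{2n-2i+1}=\binom{2n}{2i-1}$ by the symmetry $\binom{N}{j}=\binom{N}{N-j}$, and $k^{-(2n-2m+1)}=k^{-(2i-1)}$; hence the sum in \eqref{special-trans-formula} becomes $2\sum_{i=1}^{n}\binom{2n}{2i-1}(1-2^{2i})\,\frac{B_{2i}}{2i}\,k^{2i-1}\,T(2n-2i+1,k)$. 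Transferring this term to the other side of \eqref{special-trans-formula} and replacing $-(1-2^{2i})$ by $2^{2i}-1$ gives precisely \eqref{Bernoulli-recurrence}.

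For \eqref{Bernoulli-recurrence-2}, I would start from \eqref{trans-formula-2}. On the left, $\mathrm{Li}_{-1}(1-2n)=\mathrm{Li}_{-1}(-(2n-1))=(1-2^{2n})\,\frac{B_{2n}}{2n}$ by \eqref{special-values}, so the left-hand side equals $2(1-2^{2n})(1-k^{2n-1})\,\frac{B_{2n}}{2n}$. In the sum, for $1\le m\le n-1$ the argument $1-2n+2m=-(2n-2m-1)$ is again a negative odd integer, so \eqref{special-values} gives $\mathrm{Li}_{-1}(1-2n+2m)=(1-2^{2(n-m)})\,\frac{B_{2(n-m)}}{2(n-m)}$; setting $i=n-m$, so that $m=1,\dots,n-1$ corresponds to $i=n-1,\dots,1$, and using $\binom{2n-1}{2m}=\binom{2n-1}{2n-2i}=\binom{2n-1}{2i-1}$ together with $k^{-(2n-2m-1)}=k^{-(2i-1)}$, the sum becomes $2\sum_{i=1}^{n-1}\binom{2n-1}{2i-1}(1-2^{2i})\,\frac{B_{2i}}{2i}\,k^{2i-1}\,T(2n-2i,k)$. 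Dividing the resulting identity by $2$ yields \eqref{Bernoulli-recurrence-2}. The whole argument is routine bookkeeping; the only points requiring attention are the index shifts $i=n-m+1$ and $i=n-m$ and the repeated use of the binomial symmetry, together with the observation that the summation ranges already agree because the polynomials $P_m(s)$ vanish at the relevant negative integers—which is exactly why the passage to \eqref{special-trans-formula} and \eqref{trans-formula-2} produced finite sums in the first place. There is no genuine obstacle: the substantive content lies in the translation formulae \eqref{trans-1} and \eqref{trans-2} and in \eqref{special-values}, and this final step merely repackages them as recurrences among the Bernoulli numbers.
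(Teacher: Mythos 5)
Your proposal is correct and follows exactly the route the paper takes: the paper derives \eqref{Bernoulli-recurrence} and \eqref{Bernoulli-recurrence-2} by substituting \eqref{special-values} into \eqref{special-trans-formula} and \eqref{trans-formula-2} and reindexing, which is precisely your computation with $i=n-m+1$ and $i=n-m$. Your write-up in fact supplies the bookkeeping that the paper leaves implicit, and all the index shifts and binomial symmetries check out.
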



As $T(s,3)=2^s-1$, taking $k=3$ we get the following corollary.
\begin{cor}\label{k=3}
For all $n\geq 1$, the even indexed Bernoulli numbers satisfy the following recurrence relations:
\begin{align}\label{Bernoulli-recurrence-3}
B_{2n}(2^{2n}-1)3^{2n-1}=\frac{2^{2n}-1}{2}+\sum_{i=1}^{n-1}\binom{2n}{2i-1}\frac{B_{2i}}{2i}(1-2^{2i})3^{2i-1}(2^{2n-2i+1}-1),
\end{align}
%
\begin{align}\label{Bernoulli-recurrence-3}
(1-2^{2n})(1-3^{2n-1})\frac{B_{2n}}{2n}= \frac{2^{2n-1}-1}{2} +\sum_{i=1}^{n-1}\binom{2n-1}{2i-1}\frac{B_{2i}}{2i} \ 3^{2i-1}(1-2^{2i})(2^{2n-2i}-1).
\end{align}
\end{cor}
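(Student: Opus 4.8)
The plan is to specialize \thmref{Bernoulli-recurrence-thm} to $k=3$ and carry out the resulting elementary simplifications. The one computational input needed is the closed form of $T(s,3)$: directly from the definition, $T(s,3)=\sum_{1\le h\le 2}(-1)^hh^s=-1+2^s=2^s-1$ for every $s\in\C$. Hence $T(2n,3)=2^{2n}-1$, $T(2n-1,3)=2^{2n-1}-1$, and more generally $T(2m-1,3)=2^{2m-1}-1$ and $T(2m,3)=2^{2m}-1$. Substituting these values into \eqref{Bernoulli-recurrence} and \eqref{Bernoulli-recurrence-2} will yield the two claimed recurrences.

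First I would treat \eqref{Bernoulli-recurrence}. After the substitution $k=3$ it reads
\begin{align*}
2^{2n}-1=2\sum_{i=1}^{n}\binom{2n}{2i-1}\frac{B_{2i}}{2i}(2^{2i}-1)3^{2i-1}(2^{2n-2i+1}-1).
\end{align*}
The key step is to peel off the top summand $i=n$: since $\binom{2n}{2n-1}=2n$ and $2^{2n-2i+1}-1=1$ at $i=n$, that term equals $B_{2n}(2^{2n}-1)3^{2n-1}$. Transposing the remaining sum to the left-hand side, dividing by $2$, and rewriting $2^{2i}-1=-(1-2^{2i})$ in the surviving terms produces the first displayed identity of the Corollary.

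Next, for the second recurrence I would simply substitute $k=3$ together with $T(2n-1,3)=2^{2n-1}-1$ and $T(2n-2i,3)=2^{2n-2i}-1$ into \eqref{Bernoulli-recurrence-2}; this already reproduces the second displayed identity verbatim, with no further rearrangement needed. As a consistency check, the case $n=1$ (where both sums are empty) reduces the two identities to $9B_2=\tfrac12$ and $6\cdot\tfrac{B_2}{2}=\tfrac12$, each giving $B_2=\tfrac16$.

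There is no genuine obstacle here: \thmref{Bernoulli-recurrence-thm}, which itself rests on \eqref{special-values} and the evaluation $P_m(-n)=(-1)^m\binom{n}{m}$, has already absorbed all of the analytic content, so what remains is purely bookkeeping. The only point deserving a moment's care is the extraction of the $i=n$ term in the first relation, together with the accompanying sign flip $2^{2i}-1=-(1-2^{2i})$, which is what brings the formula into the normalization where the summands carry the factor $(1-2^{2i})$.
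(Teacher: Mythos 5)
Your proof is correct and follows the paper's own (essentially one-line) derivation: substitute $T(s,3)=2^s-1$ into Theorem~\ref{Bernoulli-recurrence-thm} with $k=3$, peeling off the $i=n$ summand of \eqref{Bernoulli-recurrence} and flipping the sign of $2^{2i}-1$ to obtain the first identity, while the second is immediate. (Only a trivial slip in your sanity check: for $n=1$ the first identity reads $9B_2=\tfrac{3}{2}$, not $\tfrac{1}{2}$, which still gives $B_2=\tfrac{1}{6}$.)
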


In fact, by taking different values of $k$, we can derive many such recurrence relations satisfied by
the even indexed Bernoulli numbers. The above recurrence relation \eqref{Bernoulli-recurrence-3} has an interesting application for the
tangent numbers. Recall that, for integer $n\geq 1$, $T_n$ denotes the $n$-th tangent number defined by
$$
T_n=(-1)^{n}2^{2n}(1-2^{2n})\frac{B_{2n}}{2n}.
$$
Note that $T_1=1$. Moreover, this is the only odd tangent number. This can be seen from the following corollary,
where we, more importantly, give an explicit congruence relation modulo $3$.

\begin{cor}\label{cor-tangent}
For all integers $n\geq 2$, the $n$-th tangent number $T_n$ is an even integer and not divisible by $3$. Moreover, for any $n \ge 1$,
 $$
  T_n \bmod{3}\equiv
  \begin{cases}
   1   &  \text{ if  } n  \text{ is odd},\\
    2  & \text{ if } n  \text{ is even}.
  \end{cases}
  $$
\end{cor}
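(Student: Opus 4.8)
The plan is to re-express the recurrence \eqref{Bernoulli-recurrence-3} (the one for $(1-2^{2n})(1-3^{2n-1})\tfrac{B_{2n}}{2n}$) entirely in terms of tangent numbers and then extract the congruences modulo $2$ and modulo $3$. From the definition $T_j=(-1)^{j}2^{2j}(1-2^{2j})\tfrac{B_{2j}}{2j}$ we get $(1-2^{2j})\tfrac{B_{2j}}{2j}=(-1)^{j}2^{-2j}T_j$ for all $j\ge 1$; substituting this with $j=n$ on the left and $j=i$ inside the sum on the right of \eqref{Bernoulli-recurrence-3} and multiplying through by $2^{2n}$ gives
\begin{equation*}
(-1)^{n}(1-3^{2n-1})T_n=2^{2n-1}(2^{2n-1}-1)+\sum_{i=1}^{n-1}\binom{2n-1}{2i-1}(-1)^{i}2^{2n-2i}\,3^{2i-1}(2^{2n-2i}-1)\,T_i,
\end{equation*}
valid for every $n\ge 1$ (the sum being empty when $n=1$). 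Since the $T_i$ are integers \cite{KB}, every term above is an integer, so valuation and congruence arguments are legitimate; note also $T_n\ne 0$ because $B_{2n}\ne 0$.

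For the evenness claim I would take the $2$-adic valuation $v_2$ of both sides for $n\ge 2$. On the left, $9\equiv 1\pmod 8$ gives $3^{2n-1}\equiv 3\pmod 8$, hence $v_2(1-3^{2n-1})=1$ and the left side has valuation $v_2(T_n)+1$. On the right, the first term has valuation $2n-1\ge 3$, and each summand carries the explicit factor $2^{2n-2i}$ with $2n-2i\ge 2$ for $1\le i\le n-1$ (the quantities $3^{2i-1}$ and $2^{2n-2i}-1$ being odd, and $\binom{2n-1}{2i-1}$, $T_i$ integral), so the whole right side has valuation $\ge 2$. Therefore $v_2(T_n)+1\ge 2$, i.e. $T_n$ is even. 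The only tight spot is the summand $i=n-1$, which contributes only $2^{2}$ from its power of two — just enough for the argument to close.

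For the congruence modulo $3$ I would reduce the displayed identity modulo $3$: since $3\mid 3^{2i-1}$ for all $i\ge 1$, the entire sum vanishes and $1-3^{2n-1}\equiv 1$, leaving $(-1)^{n}T_n\equiv 2^{2n-1}(2^{2n-1}-1)\pmod 3$. Using $2\equiv -1\pmod 3$ we get $2^{2n-1}\equiv 2$, so the right side is $\equiv 2\cdot 1=2\pmod 3$; thus $T_n\equiv 2(-1)^{n}\pmod 3$ for every $n\ge 1$, which is $\equiv 2$ for $n$ even and $\equiv -2\equiv 1$ for $n$ odd, and in particular $3\nmid T_n$ for all $n\ge 1$. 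The whole argument is routine once the tangent-number recurrence is in hand; the main (minor) obstacle is tracking the signs and powers of $2$ correctly in the passage from \eqref{Bernoulli-recurrence-3} to the displayed identity, together with pinning down the exact value $v_2(1-3^{2n-1})=1$ used in the evenness step.
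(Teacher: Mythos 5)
Your proposal is correct and follows essentially the same route as the paper: multiply the recurrence \eqref{Bernoulli-recurrence-3} by $2^{2n}$ to obtain an integer identity for $(-1)^nT_n$ (the paper's $t_n$), then observe that the right-hand side is divisible by $4$ while $v_2(1-3^{2n-1})=1$ to get evenness, and reduce modulo $3$ (where the sum vanishes) to get $T_n\equiv 2(-1)^n\pmod 3$. The only cosmetic difference is that you carry the sign $(-1)^n$ explicitly instead of introducing $t_n=(-1)^nT_n$.
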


\begin{proof} 
Put $t_n=(-1)^{n}T_n$. By multiplying both the sides of \eqref{Bernoulli-recurrence-3} by $2^{2n}$, we get
\begin{align}\label{2n-multiple}
(1-3^{2n-1})t_n= 2^{2n-1}(2^{2n-1}-1) +\sum_{i=1}^{n-1}\binom{2n-1}{2i-1}3^{2i-1}2^{2n-2i}(2^{2n-2i}-1)t_i.
\end{align}
As $t_i$'s are integers, note that for $n\geq 2$, each term in the right-hand side of the above equation is divisible by $4$.
However, as $1-3^{2n-1} \equiv 2 \bmod{4}$, we get 2 must divide $t_n$.

Note that $3$ divides all  the terms of the right-hand side of \eqref{2n-multiple}, except the first term. Also,
$$
2^{2n-1}(2^{2n-1}-1)  \equiv 2 \bmod{3}, \quad 1-3^{2n-1}  \equiv 1 \bmod{3}.
$$
Hence $t_n \equiv 2 \bmod{3}$, i.e.,  $T_n \equiv (-1)^n2 \bmod{3}$. This completes the proof of Corollary \ref{cor-tangent}.
\end{proof}

For $n \ge 2$, the tangent numbers $T_n$'s are not only even, but they also have a special feature modulo $10$.
This can be proved as a corollary of Theorem \ref{TA-thm-2}.

\begin{cor}\label{mod-5}
For $n\geq 2$, the last digit of tangent number $T_n$ is either $2$ or $6$, depending on $n$ is even or odd. More precisely,
for any $n \ge 1$,
$$
  T_n \bmod{5}\equiv
  \begin{cases}
   1   &  \text{ if  } n  \text{ is odd},\\
   2  & \text{ if } n  \text{ is even}.
  \end{cases}
  $$
\end{cor}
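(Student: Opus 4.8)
The plan is to run the argument of Corollary \ref{cor-tangent} once more, feeding in a congruence modulo $5$ in place of the one modulo $3$, and to obtain that congruence from a specialization of Theorem \ref{TA-thm-2}. Concretely, I would take $z=-1$ (so $q=2$) and $k=5$ in \eqref{TA-type-1}. Since $5=2\cdot2+1$ is of the required form $bq+1$ and $5$ is odd, the range $1\le h\le k/2$ consists precisely of $h\in\{1,2\}$, both coprime to $5$; moreover $z^d=-1$ for $d\in\{1,5\}$, so $\sum_{d\mid5}\mu(d)d^{-s}\mathrm{Li}_{z^d}(s)=\mathrm{Li}_{-1}(s)(1-5^{-s})$, while the inner $h$-sum in \eqref{TA-type-1} is $\sum_{h\in\{1,2\}}\bigl((-1)^h(-1)^m+(-1)^{-h}\bigr)h^m=\bigl((-1)^m+1\bigr)(2^m-1)$, which vanishes for odd $m$. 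Thus \eqref{TA-type-1} collapses to
\begin{align*}
\mathrm{Li}_{-1}(s)\,(1-5^{-s})=2^{-s}-1+2\sum_{m\ge1}\frac{P_{2m}(s)\,\mathrm{Li}_{-1}(s+2m)}{5^{s+2m}}\,(2^{2m}-1).
\end{align*}

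Since $\mathrm{Li}_{-1}$ is entire and, as noted at the beginning of this section, the translation formulae hold for the meromorphic continuations, I may substitute $s=1-2n$ for any $n\ge1$; the series on the right is then a \emph{finite} sum, because $P_{2m}(1-2n)=0$ once $m\ge n$. Using $P_{2m}(1-2n)=\binom{2n-1}{2m}$ for $1\le m\le n-1$, the special-value formula \eqref{special-values} in the form $\mathrm{Li}_{-1}(1-2j)=(1-2^{2j})B_{2j}/(2j)=t_j/2^{2j}$ with $t_j:=(-1)^jT_j$ as in Corollary \ref{cor-tangent}, and then multiplying through by $2^{2n}$, I obtain the integer recurrence
\begin{align*}
(1-5^{2n-1})\,t_n=2^{2n}(2^{2n-1}-1)+\sum_{m=1}^{n-1}\binom{2n-1}{2m}\,2^{2m+1}(2^{2m}-1)\,5^{2n-2m-1}\,t_{n-m},
\end{align*}
where the $t_j$ are integers (as recalled in the proof of Corollary \ref{cor-tangent}).

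Finally I reduce this modulo $5$. Every summand on the right carries the factor $5^{2n-2m-1}$ with exponent $\ge1$, hence is $\equiv0\pmod5$, and $1-5^{2n-1}\equiv1\pmod5$, so $t_n\equiv 2^{2n}(2^{2n-1}-1)=2^{4n-1}-2^{2n}\pmod5$. Since $2$ has order $4$ modulo $5$, one has $2^{4n-1}\equiv2^{3}\equiv3\pmod5$ for every $n$, whereas $2^{2n}\equiv1$ or $4\pmod5$ according as $n$ is even or odd; therefore $t_n\equiv2\pmod5$ for even $n$ and $t_n\equiv4\pmod5$ for odd $n$. Passing back through $T_n=(-1)^nt_n$ gives $T_n\equiv2\pmod5$ for even $n$ and $T_n\equiv1\pmod5$ for odd $n$, which is the displayed congruence. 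For the claim on the last digit, I combine this with Corollary \ref{cor-tangent}: for $n\ge2$ we have $2\mid T_n$, so by the Chinese Remainder Theorem $T_n\equiv2\pmod{10}$ when $n$ is even and $T_n\equiv6\pmod{10}$ when $n$ is odd.

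I do not anticipate a real obstacle, as the argument is a close parallel of the mod-$3$ proof of Corollary \ref{cor-tangent}; the only points requiring care are the bookkeeping of powers of $2$ after clearing denominators (an easy place for an exponent slip) and verifying that the series genuinely truncates after the substitution $s=1-2n$ because of the vanishing of $P_{2m}(1-2n)$. One could equally derive the same recurrence from \eqref{Bernoulli-recurrence-2} with $k=5$ and $T(s,5)=-1+2^s-3^s+4^s$, but that is slightly more laborious since all of $2^s,3^s,4^s$ then survive rather than only powers of $2$.
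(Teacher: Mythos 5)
Your proposal is correct and follows essentially the same route as the paper: specializing \eqref{TA-type-1} at $z=-1$, $k=5$, substituting $s=1-2n$, converting to an integer recurrence for $t_n=(-1)^nT_n$ via \eqref{special-values}, reducing modulo $5$, and finishing with the Chinese Remainder Theorem using the evenness of $T_n$ from Corollary \ref{cor-tangent}. The recurrence you obtain matches the paper's \eqref{k=5} after the reindexing $i=n-m$, and your mod-$5$ evaluation of $2^{2n}(2^{2n-1}-1)$ agrees with the paper's.
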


Taking $z=-1$ and $k=5$ in \eqref{TA-type-1}, we get the translation formula,
$$
\mathrm{Li}_{-1}(s)(1-5^{-s})=(2^{-s}-1)+2\sum_{m\geq 1}\frac{P_{2m}(s)\mathrm{Li}_{-1}(s+2m)}{5^{s+2m}}(2^{2m}-1).
$$
Now if we substitute $s=-2n+1$
for $n\geq 1$ and use \eqref{special-values}, we get the following identity:
$$
(1-5^{2n-1})(1-2^{2n})\frac{B_{2n}}{2n}=(2^{2n-1}-1)+2\sum_{m=1}^{n-1}\binom{2n-1}{2m}{5^{2n-2m-1}}(1-2^{2n-2m})\frac{B_{2n-2m}}{2n-2m}(2^{2m}-1).
$$
As $\binom{2n-1}{2m}=\binom{2n-1}{2n-2m-1}$, replacing $n-m$ by $i$, we get the following recurrence relation:
\begin{align*}
(1-5^{2n-1})(1-2^{2n})\frac{B_{2n}}{2n}=(2^{2n-1}-1)+2\sum_{i=1}^{n-1}\binom{2n-1}{2i-1}\frac{B_{2i}}{2i}{5^{2i-1}}(1-2^{2i})(2^{2n-2i}-1).
\end{align*}
Multiply both the sides of the above equation by $2^{2n}$ and put $t_n=(-1)^{n}T_n$ for all $n\geq 1$, we get the following recurrence for $t_n$:
\begin{align}\label{k=5}
(1-5^{2n-1})t_n=2^{2n}(2^{2n-1}-1)+2\sum_{i=1}^{n-1}\binom{2n-1}{2i-1}{5^{2i-1}}2^{2n-2i}(2^{2n-2i}-1)t_i.
\end{align}
We are now ready to prove Corollary \ref{mod-5}.

\begin{proof}[Proof of Corollary \ref{mod-5}]
For integer $n\geq 1$, using \eqref{k=5} we have $t_n \equiv 2^{2n}(2^{2n-1}-1) \bmod{5}$. It is easy to compute that
$$
  2^{2n}(2^{2n-1}-1) \bmod{5}\equiv
  \begin{cases}
   4   &  \text{ if  } n  \text{ is odd},\\
   2  & \text{ if } n  \text{ is even}.
  \end{cases}
  $$
Therefore,
$$
  T_n \equiv (-1)^n2^{2n}(2^{2n-1}-1) \bmod{5}\equiv
  \begin{cases}
   1   &  \text{ if  } n  \text{ is odd},\\
   2 & \text{ if } n  \text{ is even}.
  \end{cases}
  $$
Now we know from Corollary \ref{cor-tangent} that for $n \ge 2$, $T_n \equiv 0 \bmod{2}$. Hence by the Chinese remainder theorem
we get that
$$
  T_n \bmod{10}\equiv
  \begin{cases}
   6   &  \text{ if  } n  \text{ is odd},\\
   2  & \text{ if } n  \text{ is even}.
  \end{cases}
  $$
This completes the proof.
\end{proof}

\subsection*{Data availability statement}
This manuscript has no associated data.

\end{document}